\documentclass[11pt,reqno]{amsart}

\scrollmode

\usepackage{times}
\usepackage{amsmath,graphicx}
\usepackage{latexsym}
\usepackage{amscd,amsthm,amssymb,bm}
\usepackage[all]{xy}

\newtheorem{thm}{Theorem}[section]
\newtheorem{prop}[thm]{Proposition}
\newtheorem{lem}[thm]{Lemma}
\newtheorem{cor}[thm]{Corollary}

\theoremstyle{definition}
\newtheorem{ex}[thm]{Example}
\newtheorem{defn}[thm]{Definition}
\newtheorem*{conv}{Conventions}

\theoremstyle{remark}
\newtheorem{rem}[thm]{Remark}

\numberwithin{equation}{section}

\newcommand{\nablaf}{\nabla^{\scriptscriptstyle f}}

\newcommand{\Rf}{R^{\scriptscriptstyle f}}

\newcommand{\Dirf}{D^{\scriptscriptstyle f}}
\newcommand{\Dirfp}{D^{{\scriptscriptstyle f}\prime}}
\newcommand{\Dirfpp}{D^{{\scriptscriptstyle f}\prime\prime}}

\newcommand{\Dirft}{D^{\scriptscriptstyle f_t}}

\newcommand{\Dirbfpp}{\bar{D}^{{\scriptscriptstyle f}\prime\prime}}

\newcommand{\Dolfp}{\bar{\partial}^{{\scriptscriptstyle f}\prime}}
\newcommand{\Dolfpp}{\bar{\partial}^{{\scriptscriptstyle f}\prime\prime}}

\newcommand{\Dolfps}{\bar{\partial}^{{\scriptscriptstyle f}\prime*}}
\newcommand{\Dolfpps}{\bar{\partial}^{{\scriptscriptstyle f}\prime\prime*}}

\newcommand{\Kh}{K^{\scriptscriptstyle\frac{1}{2}}}
\newcommand{\Kbh}{\bar{K}^{\scriptscriptstyle\frac{1}{2}}}
\newcommand{\Kmh}{K^{-\scriptscriptstyle\frac{1}{2}}}
\newcommand{\Kbmh}{\bar{K}^{-\scriptscriptstyle\frac{1}{2}}}

\title{$J$-holomorphic curves and Dirac-harmonic maps}
\author{M.~J.~D.~Hamilton}
\address{      Fachbereich Mathematik\\
               Universit\"at Stuttgart\\
               Pfaffenwaldring 57\\
               70569 Stuttgart\\
               Germany}
\email{mark.hamilton@math.lmu.de}
\date{\today}
\subjclass[2010]{53C27, 32Q65, 32Q15}
\keywords{Dirac-harmonic map; J-holomorphic curve; K\"ahler manifold}

\begin{document}

\begin{abstract} Dirac-harmonic maps are critical points of a fermionic action functional, generalizing the Dirichlet energy for harmonic maps. We consider the case where the source manifold is a closed Riemann surface with the canonical $\mathrm{Spin}^c$-structure determined by the complex structure and the target space is a K\"ahler manifold. If the underlying map $f$ is a $J$-holomorphic curve, we determine a space of spinors on the Riemann surface which form Dirac-harmonic maps together with $f$. For suitable complex structures on the target manifold the tangent bundle to the moduli space of $J$-holomorphic curves consists of Dirac-harmonic maps. We also discuss the relation to the A-model of topological string theory.
\end{abstract}

\maketitle

\section{Introduction}
We briefly recall the definition of Dirac-harmonic maps (see \cite{CJLW1, CJLW2} and Section \ref{sect:background} for more details). Let $(\Sigma,h)$ and $(M,g)$ be Riemannian manifolds, where $\Sigma$ is closed and oriented, and $f\colon \Sigma\rightarrow M$ a smooth map. We assume that $\Sigma$ is a spin manifold and choose a spin structure $\mathfrak{s}$ with associated complex spinor bundle $S$. We can then form the twisted spinor bundle $S\otimes_{\mathbb{R}} f^*TM$ of spinors on $\Sigma$ with values in the pullback $f^*TM$ (also called spinors along the map $f$). The Dirac operator
\begin{equation*}
\Dirf\colon \Gamma(S\otimes_{\mathbb{R}} f^*TM)\longrightarrow \Gamma(S\otimes_{\mathbb{R}} f^*TM)
\end{equation*} 
is determined by the Levi--Civita connections on $\Sigma$ and $M$. 

Dirac-harmonic maps $(f,\psi)$, where $\psi\in\Gamma(S\otimes_{\mathbb{R}} f^*TM)$, are solutions of the following system of coupled equations \cite{CJLW1, CJLW2}: 
\begin{equation}\label{eqn:Dirac-harmonic map intro}
\begin{split}
\tau(f)&=\mathcal{R}(f,\psi)\\
\Dirf\psi&=0.
\end{split}
\end{equation}
Here $\tau(f)\in\Gamma(f^*TM)$ is the so-called tension field of $f$ (cf.~\cite{ES} and equation \eqref{eqn:tension field tau f}). The curvature term $\mathcal{R}(f,\psi)$ is determined by the curvature tensor $R$ of the Riemannian metric $g$ on $M$ and is an algebraic expression in the differential $df$ and the spinor $\psi$ (linear in $df$ and quadratic in $\psi$); see Appendix \ref{sect:appendix B} for a definition. 

The system of equations \eqref{eqn:Dirac-harmonic map intro} for Dirac-harmonic maps makes sense more generally if we replace the spin structure $\mathfrak{s}$ by a $\mathrm{Spin}^c$-structure $\mathfrak{s}^c$ and consider twisted spinors $\psi\in\Gamma(S^c\otimes_{\mathbb{R}} f^*TM)$, where $S^c$ is the complex spinor bundle associated to $\mathfrak{s}^c$. For the definition of the Dirac operator
\begin{equation*}
\Dirf\colon \Gamma(S^c\otimes_{\mathbb{R}} f^*TM)\longrightarrow \Gamma(S^c\otimes_{\mathbb{R}} f^*TM)
\end{equation*}
one has to choose (in addition to the Riemannian metrics $h$ and $g$) a Hermitian connection on the characteristic complex line bundle of $\mathfrak{s}^c$. We assume throughout that such a choice has been made and fixed (in the case we discuss there is a canonical choice of such a connection, determined by the Riemannian metric $h$).

Every almost Hermitian manifold $(\Sigma,j,h)$ has a canonical $\mathrm{Spin}^c$-structure $\mathfrak{s}^c$ whose associated spinor bundle $S^c=S^{c+}\oplus S^{c-}$ is the direct sum of the positive and negative Weyl spinor bundles
\begin{align*}
S^{c+}&=\Lambda^{0,\mathrm{even}}\\
S^{c-}&=\Lambda^{0,\mathrm{odd}}.
\end{align*} 
We focus on the special case where $(\Sigma,j,h)$ is a closed Riemann surface, so that
\begin{align*}
S^{c+}&=\Lambda^{0,0}=\underline{\mathbb{C}}\\
S^{c-}&=\Lambda^{0,1}=K^{-1}.
\end{align*} 
The Levi--Civita connection $\nabla^h$ induces a connection on $S^c$ with Dirac operator 
\begin{equation*}
D\colon \Gamma(S^{c\pm})\longrightarrow \Gamma(S^{c\mp})
\end{equation*}
equal to the classical Dolbeault--Dirac operator
\begin{equation*}
\sqrt{2}(\bar{\partial}+\bar{\partial}^*).
\end{equation*}
Suppose that the target space $(M^{2n},J,g,\omega)$ is an almost Hermitian manifold with a Hermitian connection $\nabla^M$ and $f\colon\Sigma\rightarrow M$ a smooth map. 
We first derive a formula for the twisted Dirac operator
\begin{equation*}
\Dirf\colon \Gamma(S^{c\pm}\otimes_{\mathbb{R}} f^*TM)\longrightarrow \Gamma(S^{c\mp}\otimes_{\mathbb{R}} f^*TM).
\end{equation*}
\begin{prop}\label{prop:twisted Dirac Df formula}
The spinor bundle $S^c\otimes_{\mathbb{R}} f^*TM$ decomposes into two twisted complex spinor bundles
\begin{equation*}
S^c\otimes_{\mathbb{R}} f^*TM=(S^c\otimes_{\mathbb{C}} f^*T^{1,0}M)\oplus (S^c\otimes_{\mathbb{C}} f^*T^{0,1}M).
\end{equation*}
There is a corresponding decomposition $\Dirf=\Dirfp+\Dirfpp$ of the Dirac operator into two twisted Dolbeault--Dirac operators
\begin{align*}
\Dirfp&=\sqrt{2}(\Dolfp+\Dolfps)\\
\Dirfpp&=\sqrt{2}(\Dolfpp+\Dolfpps).
\end{align*}
The Hirzebruch--Riemann--Roch Theorem implies for the indices
\begin{align*}
\mathrm{ind}_{\mathbb{C}}\Dirfp&=n(1-g_\Sigma)+c_1(A)\\
\mathrm{ind}_{\mathbb{C}}\Dirfpp&=n(1-g_\Sigma)-c_1(A),
\end{align*}
where $g_\Sigma$ is the genus of $\Sigma$, $A=f_*[\Sigma]\in H_2(M;\mathbb{Z})$ is the integral homology class represented by $\Sigma$ under $f$ and $c_1(A)=\langle c_1(TM,J),A\rangle$.
\end{prop}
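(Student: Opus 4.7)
The plan is to proceed in three stages: establish the bundle decomposition, show that $\Dirf$ is block-diagonal with respect to it and identify each block with a twisted Dolbeault--Dirac operator, and finally compute the indices by Hirzebruch--Riemann--Roch.

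For the splitting of $S^c\otimes_{\mathbb{R}} f^*TM$, the key input is the canonical isomorphism of complex bundles
\[
W\otimes_{\mathbb{R}} V \;\cong\; W\otimes_{\mathbb{C}}(V\otimes_{\mathbb{R}}\mathbb{C}),
\]
valid for any complex bundle $W$ and real bundle $V$, where the complex structure on the left comes from multiplication by $i$ on $W$. Taking $W=S^c$ and $V=f^*TM$ and combining with the eigenspace decomposition of $J$,
\[
f^*TM\otimes_{\mathbb{R}}\mathbb{C} \;=\; f^*T^{1,0}M \oplus f^*T^{0,1}M,
\]
produces the claimed decomposition of the twisted spinor bundle.

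Next I would check that $\Dirf$ respects this splitting. Clifford multiplication acts only on the $S^c$ factor, so it preserves the decomposition automatically. The twisted connection on $S^c\otimes_{\mathbb{R}} f^*TM$ is assembled from the canonical Spin$^c$-connection on $S^c$ and the $f$-pullback of the Hermitian connection $\nabla^M$; because $\nabla^M$ preserves $J$, the pullback connection preserves the $T^{1,0}/T^{0,1}$ decomposition, so $\Dirf$ splits as $\Dirfp+\Dirfpp$. To identify $\Dirfp$ with $\sqrt{2}(\Dolfp+\Dolfps)$, I would invoke the standard dictionary for the canonical Spin$^c$ structure on a Hermitian manifold, under which the spinor bundle becomes $\Lambda^{0,\bullet}$, Clifford multiplication by $v\in T\Sigma$ becomes $\sqrt{2}(v^{0,1}\wedge(\cdot)-\iota_{v^{1,0}}(\cdot))$, and the untwisted Dirac operator becomes $\sqrt{2}(\bar\partial+\bar\partial^*)$. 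Twisting this description by the Hermitian bundle $f^*T^{1,0}M$ equipped with the pullback connection, the $(0,1)$-part of which I call $\Dolfp$, yields $\Dirfp=\sqrt{2}(\Dolfp+\Dolfps)$ after a local computation in a Hermitian frame. The case of $\Dirfpp$ is formally identical with $f^*T^{0,1}M$ in place of $f^*T^{1,0}M$.

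For the indices, the operator $\Dirfp$ restricted to $S^{c+}\otimes_{\mathbb{C}} f^*T^{1,0}M=f^*T^{1,0}M$ is $\sqrt{2}\Dolfp$ (the $\Dolfps$ piece is its formal adjoint on the $S^{c-}$ summand), so $\mathrm{ind}_{\mathbb{C}}\Dirfp$ equals the holomorphic Euler characteristic of $f^*T^{1,0}M$ with the induced $\bar\partial$-operator. Hirzebruch--Riemann--Roch on the Riemann surface $\Sigma$ gives
\[
\mathrm{ind}_{\mathbb{C}}\Dirfp \;=\; \int_{\Sigma}\mathrm{ch}(f^*T^{1,0}M)\,\mathrm{td}(T\Sigma) \;=\; n(1-g_\Sigma)+\langle c_1(f^*T^{1,0}M),[\Sigma]\rangle,
\]
and naturality of Chern classes under pullback rewrites the last term as $\langle c_1(TM,J),f_*[\Sigma]\rangle = c_1(A)$. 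The analogous computation for $\Dirfpp$ uses $c_1(\overline{f^*T^{1,0}M})=-c_1(A)$ and produces the second formula. The main obstacle is the local verification of the Clifford-to-wedge-and-contraction dictionary in the twisted setting: once this is in place both the splitting of the Dirac operator and the identification of each block are automatic, and the indices then follow formally from Hirzebruch--Riemann--Roch.
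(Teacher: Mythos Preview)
Your proposal is correct and follows essentially the same route as the paper. The paper sets up the bundle decomposition and the splitting of $\Dirf$ in the body of Section~4 (using exactly the $J$-preservation of $\nabla^M$ that you invoke), and the proof proper is a two-line local computation in an orthonormal frame $(e_1,e_2=je_1)$ applying the explicit Clifford multiplication formulas from Section~3 to a positive spinor $\psi\in\Gamma(f^*T^{1,0}M)$, then citing self-adjointness of $\Dirfp$ to obtain the $\Dolfps$ summand; the index formulas are simply attributed to Hirzebruch--Riemann--Roch without further elaboration, so your treatment of that part is actually more detailed than the paper's.
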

We then restrict to the case where $(M,J,g,\omega)$ is K\"ahler, $\nabla^M=\nabla^g$ the Levi--Civita connection and $f\colon\Sigma\rightarrow M$ a $J$-holomorphic curve. In this case, the Dolbeault operator $\Dolfp$ is equal to the linearization $L_f\bar{\partial}_J$ of the non-linear Cauchy--Riemann operator $\bar{\partial}_J$ in $f$. In particular, the kernel of $\Dirfp$ is given by the direct sum of the deformation and obstruction space for the $J$-holomorphic curve $f$ (cf.~
Remark \ref{rem:discussion J integrable regular}):
\begin{align*}
\mathrm{ker}\,\Dirfp&= \mathrm{ker}\,\Dolfp\oplus \mathrm{ker}\,\Dolfps\\
&\cong \mathrm{Def}_J(f)\oplus \mathrm{Obs}_J(f).
\end{align*}
The pair $(f,J)$ is called regular if $\mathrm{Obs}_J(f)=0$.
\begin{thm}\label{thm:J-hol curve Dirac-harmonic}
Suppose that $(M,J,g,\omega)$ is a K\"ahler manifold of complex dimension $n>0$ and $f\colon \Sigma\rightarrow M$ a $J$-holomorphic curve. If $\psi\in \Gamma(S^c\otimes_{\mathbb{C}} f^*T^{\mathbb{C}}M)$ is an element of one of the following vector spaces, then $(f,\psi)$ is Dirac-harmonic:
\begin{equation}\label{eqn:J-hol curve Dirac-harmonic vector spaces}
\begin{array}{ll} \mathrm{ker}\,\Dolfp\oplus \mathrm{ker}\,\Dolfpp, & \mathrm{ker}\,\Dolfps\oplus \mathrm{ker}\,\Dolfpps \\ \mathrm{ker}\,\Dolfp\oplus \mathrm{ker}\,\Dolfpps, & \mathrm{ker}\,\Dolfpp\oplus \mathrm{ker}\,\Dolfps.  \end{array}
\end{equation}
At least one of these vector spaces is non-zero, except possibly in the case that $g_\Sigma=1$ and $c_1(A)=0$. 
\end{thm}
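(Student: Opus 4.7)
The plan is as follows. Since $(M,J,g,\omega)$ is K\"ahler and $f$ is $J$-holomorphic, $f$ is automatically a harmonic map, so $\tau(f)=0$. The system \eqref{eqn:Dirac-harmonic map intro} therefore collapses to the two independent conditions $\Dirf\psi=0$ and $\mathcal{R}(f,\psi)=0$, and the task is to verify both for any $\psi$ lying in one of the four vector spaces displayed in \eqref{eqn:J-hol curve Dirac-harmonic vector spaces}.

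The vanishing $\Dirf\psi=0$ is the easy half and follows directly from Proposition \ref{prop:twisted Dirac Df formula}. Writing $\Dirf=\Dirfp+\Dirfpp$, the operator acts as $\sqrt{2}(\Dolfp+\Dolfps)$ on the $f^*T^{1,0}M$-piece and as $\sqrt{2}(\Dolfpp+\Dolfpps)$ on the $f^*T^{0,1}M$-piece, where $\Dolfp,\Dolfpp$ send positive Weyl spinors to negative ones while $\Dolfps,\Dolfpps$ do the reverse. A spinor in any of the four subspaces is a sum of components each of which lies in the kernel of whichever of these four Dolbeault operators actually acts on it; all other summands of $\Dirf\psi$ vanish purely for Weyl-grading reasons. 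Hence $\Dirf\psi=0$ in all four cases.

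The main step is the verification $\mathcal{R}(f,\psi)=0$. The curvature term is built by contracting the Riemann tensor $R^M$ with two copies of $\psi$ (paired via the Hermitian spinor product together with a Clifford multiplication) and one copy of $df$. The approach is to combine three structural facts: (i) on a K\"ahler manifold $R^M$ has type $(1,1)$ in its form slots and commutes with $J$ in its endomorphism slot, so $R^M(V,W)=0$ whenever $V,W$ are both of type $(1,0)$ or both of type $(0,1)$, and $R^M(V,W)$ preserves the splitting $TM\otimes_{\mathbb{R}}\mathbb{C}=T^{1,0}M\oplus T^{0,1}M$; (ii) $J$-holomorphicity makes $df$ complex-linear, so it preserves the $(1,0)/(0,1)$-splittings between $\Sigma$ and $M$; (iii) in each of the four spaces the $T^{1,0}M$- and $T^{0,1}M$-components of $\psi$ are constrained to prescribed positive or negative Weyl subbundles. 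For the two spaces in the first row of \eqref{eqn:J-hol curve Dirac-harmonic vector spaces} the spinor has uniform Weyl type, and the orthogonality $\langle S^{c+},S^{c-}\rangle=0$ combined with the Weyl-grading swap of Clifford multiplication kills all potential cross-type contributions, while pure-type contributions vanish by (i). For the two spaces in the second row the two pieces of $\psi$ have opposite Weyl gradings; there the surviving mixed-type terms are paired off using the antisymmetry of $R^M$ in its form slots together with the skew-Hermitian identity $\langle e_\alpha\cdot\phi,\chi\rangle=-\langle\phi,e_\alpha\cdot\chi\rangle$ for Clifford multiplication. This mixed-grading bookkeeping is the main obstacle of the proof.

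Finally, for the non-vanishing statement the plan is to invoke the index formula of Proposition \ref{prop:twisted Dirac Df formula}. Since $\mathrm{ind}_{\mathbb{C}}\Dirfp=\dim_{\mathbb{C}}\ker\Dolfp-\dim_{\mathbb{C}}\ker\Dolfps$ and analogously for $\Dirfpp$, a nonzero value of either index forces at least one of $\ker\Dolfp,\ker\Dolfps,\ker\Dolfpp,\ker\Dolfpps$ to be nontrivial, and such a nontrivial kernel then appears, possibly paired with zero in the other summand, in at least one of the four spaces of \eqref{eqn:J-hol curve Dirac-harmonic vector spaces}. Both indices vanish simultaneously only if $n(1-g_\Sigma)+c_1(A)=0$ and $n(1-g_\Sigma)-c_1(A)=0$, and since $n>0$ this forces $g_\Sigma=1$ and $c_1(A)=0$, exactly the case excluded from the statement.
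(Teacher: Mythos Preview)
Your overall strategy matches the paper's: reduce to $\tau(f)=0$ (Proposition~\ref{prop:J-hol tau(f) 0}), $\Dirf\psi=0$ (Proposition~\ref{prop:twisted Dirac Df formula}), and $\mathcal{R}(f,\psi)=0$ (Proposition~\ref{prop:curv term zero}), then use the index formula for the non-vanishing claim. The parts concerning $\tau(f)$, $\Dirf\psi$, and the index argument are correct and essentially identical to the paper's.

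For $\mathcal{R}(f,\psi)=0$ you take a direct computation, whereas the paper's preferred argument (Proposition~\ref{prop:curv term zero}) is variational: for $\psi$ a section of any of the four subbundles and any variation $f_t$, one has $\langle\psi_t,\Dirft\psi_t\rangle\equiv 0$ simply because $\Dirft$ swaps the Weyl grading and preserves the $(1,0)/(0,1)$ type, so $\psi_t$ and $\Dirft\psi_t$ always lie in orthogonal pieces; differentiating and invoking \eqref{eqn:var psi,Dpsi R(f,psi)} gives the result with no curvature bookkeeping at all. This also shows $\mathcal{R}(f,\psi)=0$ for \emph{any} smooth $f$, so your ingredient (ii) is not needed here.

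Your direct computation is viable, but the bookkeeping as written has a flaw. For the first row, the Weyl orthogonality $\langle S^{c+},S^{c-}\rangle=0$ together with the grading swap of Clifford multiplication already kills \emph{every} term in $\langle\psi,\Rf(X,\psi)\rangle$; no appeal to the K\"ahler curvature type is needed. For the second row, your ``pairing off via antisymmetry of $R^M$ and the skew-Hermitian identity'' does not by itself give zero: those two identities only show that the two surviving cross terms are complex conjugates, hence that their sum is real. What actually makes each cross term vanish \emph{separately} is the second clause of your (i), namely that $R(X,Y)$ commutes with $J$ and thus preserves $T^{1,0}M$ and $T^{0,1}M$, combined with the Hermitian orthogonality $T^{1,0}M\perp T^{0,1}M$. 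So the correct ingredient is already on your list; you just invoked the wrong mechanism.
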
 
\begin{cor}\label{cor:moduli M(A,J) Dirac-harmonic}
Let $(\Sigma,j)$ be a Riemann surface, $A\in H_2(M;\mathbb{Z})$ and denote by $\mathcal{M}(A,J)$ the moduli space of all $J$-holomorphic curves $f\colon\Sigma\rightarrow M$ with $f_*[\Sigma]=A$. Suppose that $(f,J)$ is regular for all $f\in\mathcal{M}(A,J)$. Then $\mathcal{M}(A,J)$ is a smooth manifold (possibly empty) of dimension
\begin{equation*}
\dim_{\mathbb{R}}\mathcal{M}(A,J)=2n(1-g_\Sigma)+2c_1(A).
\end{equation*}
Every element $(f,\psi)\in T\mathcal{M}(A,J)$ of the tangent bundle of the moduli space is a Dirac-harmonic map.
\end{cor}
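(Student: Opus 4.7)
My plan is to reduce the corollary to the main theorem (Theorem \ref{thm:J-hol curve Dirac-harmonic}) once the standard Fredholm framework for moduli spaces of $J$-holomorphic curves is in place. First I would invoke the usual setup from Gromov--Witten theory (see e.g.~McDuff--Salamon): the operator $\bar{\partial}_J$ is a smooth Fredholm section of a Banach bundle over an appropriate Banach manifold of maps $\Sigma\to M$ in the class $A$, and $\mathcal{M}(A,J)$ is its zero set. By the discussion preceding Theorem \ref{thm:J-hol curve Dirac-harmonic}, the linearization of $\bar{\partial}_J$ at a zero $f$ is the twisted Dolbeault operator $\Dolfp$. Regularity of $(f,J)$ means that $\mathrm{Obs}_J(f)\cong \mathrm{ker}\,\Dolfps=0$, equivalently that the cokernel of $\Dolfp$ vanishes, so the implicit function theorem for Fredholm sections equips $\mathcal{M}(A,J)$ with a smooth manifold structure whose tangent space at $f$ is identified with $\mathrm{ker}\,\Dolfp\cong \mathrm{Def}_J(f)$, viewed as a real vector space.

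For the dimension, I would apply the index formula from Proposition \ref{prop:twisted Dirac Df formula}. Since $\Dirfp=\sqrt{2}(\Dolfp+\Dolfps)$ has the same complex index as $\Dolfp$, we have $\mathrm{ind}_{\mathbb{C}}\Dolfp=n(1-g_\Sigma)+c_1(A)$. Under the regularity hypothesis this index equals $\dim_{\mathbb{C}}\mathrm{ker}\,\Dolfp$, and hence
\begin{equation*}
\dim_{\mathbb{R}}\mathcal{M}(A,J)=2\dim_{\mathbb{C}}\mathrm{ker}\,\Dolfp=2n(1-g_\Sigma)+2c_1(A),
\end{equation*}
as claimed.

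To show that every $(f,\psi)\in T\mathcal{M}(A,J)$ is a Dirac-harmonic map, I would use the identification $T_f\mathcal{M}(A,J)\cong \mathrm{ker}\,\Dolfp$ from the first step and view $\psi$ as a section of $S^c\otimes_{\mathbb{C}}f^*T^{1,0}M$ inside the full twisted spinor bundle $S^c\otimes_{\mathbb{R}}f^*TM$. Such a $\psi$ lies in the first of the four vector spaces in \eqref{eqn:J-hol curve Dirac-harmonic vector spaces}, namely $\mathrm{ker}\,\Dolfp\oplus \mathrm{ker}\,\Dolfpp$, with the second summand taken to be zero. Theorem \ref{thm:J-hol curve Dirac-harmonic} then immediately yields that $(f,\psi)$ is Dirac-harmonic.

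The only genuinely delicate point is making the identification between real tangent vectors to $\mathcal{M}(A,J)$ and complex sections in $\mathrm{ker}\,\Dolfp$ fully explicit, and then locating these sections within the spinor bundle used in the definition of Dirac-harmonic maps. This is bookkeeping via the decomposition $f^*TM\otimes_{\mathbb{R}}\mathbb{C}=f^*T^{1,0}M\oplus f^*T^{0,1}M$ together with the real structure on the target, and I do not expect any genuine obstacle beyond keeping the conventions consistent with Proposition \ref{prop:twisted Dirac Df formula}.
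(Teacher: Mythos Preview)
Your proposal is correct and follows essentially the same approach as the paper, which simply notes that under the regularity assumption $T_f\mathcal{M}(A,J)=\mathrm{ker}\,\Dolfp$ (deferring the smoothness and dimension statement to Remark \ref{rem:discussion J integrable regular} and the McDuff--Salamon reference) and then appeals to Theorem \ref{thm:J-hol curve Dirac-harmonic}. You have merely spelled out in more detail the Fredholm/implicit function theorem argument and the index computation that the paper leaves implicit.
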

\begin{rem}
For the case of spin structures the vector spaces corresponding to the ones in \eqref{eqn:J-hol curve Dirac-harmonic vector spaces} appear in the proof of \cite[Theorem 1.1]{Sun}.
\end{rem}
\begin{rem}
Dirac-harmonic maps for the canonical $\mathrm{Spin}^c$-structure on Riemann surfaces are closely related to the A-model of topological string theory \cite{W1, W2} (with a fixed metric $h$, i.e.~without worldsheet gravity); see Section \ref{sect:TQFT} for a short discussion. In particular, in the A-model path integrals of certain operators localize to integrals over the finite-dimensional moduli spaces $\mathcal{M}(A,J)$ and the tangent bundle $T\mathcal{M}(A,J)$ can be identified with the space of $\chi$-zero modes (in our notation $\chi=\psi\in \mathrm{ker}\,\Dolfp$).
\end{rem}
In the last section we consider a generalization of Theorem \ref{thm:J-hol curve Dirac-harmonic} to twisted $\mathrm{Spin}^c$-structures $S^c\otimes_{\mathbb{C}} L$ with a holomorphic line bundle $L\rightarrow \Sigma$; see Corollary \ref{cor:Dirac-harmonic twisted hol L}. For $L=K^{\scriptscriptstyle\frac{1}{2}}$ this includes the case of the spinor bundle $S=S^c\otimes_{\mathbb{C}} K^{\scriptscriptstyle\frac{1}{2}}$ of a spin structure $\mathfrak{s}$.

Dirac-harmonic maps $(f,\psi)$ from surfaces $\Sigma$ with a spin structure to Riemannian target manifolds $M$ have been studied before. We summarize some of the results in \cite{AG1, AG2, CJLW2, CJSZ, JMZ, Mo, Sun, Yang}. 

Examples of Dirac-harmonic maps for $\Sigma=M=S^2$ were constructed in \cite{CJLW2} where $f$ is a conformal map and $\psi$ is defined using a twistor spinor on $S^2$. This method was generalized in \cite{JMZ} to arbitrary Riemann surfaces $\Sigma$ admitting twistor spinors and arbitrary Riemannian manifolds $M$, where the map $f$ is harmonic (among closed surfaces only $S^2$ and $T^2$ admit non-zero twistor spinors \cite[A.2.2]{Gin}). In \cite{Yang} and \cite{CJSZ} it was shown that all Dirac-harmonic maps with source $\Sigma$ of genus $g_\Sigma$ and target $M=S^2$, so that $|\deg(f)|+1>g_\Sigma$, can be obtained using the constructions from \cite{CJLW2, JMZ}, where $f$ is holomorphic or antiholomorphic and $\psi$ is defined using a twistor spinor on $\Sigma$, possibly with isolated singularities (see also \cite{Mo}).

Dirac-harmonic maps $(f,\psi)$ from spin K\"ahler manifolds to arbitrary K\"ahler manifolds were studied in \cite{Sun}. In Example \ref{ex:Dirac harmonic spin structure} below we consider the case where the source is a Riemann surface $\Sigma$ with a spin structure and the map $f$ is $J$-holomorphic.

Existence results for Dirac-harmonic maps related to the $\alpha$-genus $\alpha(\Sigma,\mathfrak{s},f)$ for a spin structure $\mathfrak{s}$ on $\Sigma$ were discussed in \cite{AG1}. Section 10.1 in \cite{AG1} contains several results for Dirac-harmonic maps from surfaces to Riemannian manifolds $M$ of dimension $\geq 3$. In \cite{AG2} Dirac-harmonic maps from surfaces to Riemannian manifolds were constructed with methods related to an ansatz in \cite{JMZ}.

In \cite{G1, G2, G3} another fermionic generalization of $J$-holomorphic curves was studied (see Remark \ref{rem:hol supercurve} for a brief discussion of the relation to Dirac-harmonic maps).

\begin{conv} In the following, all Riemann surfaces $\Sigma$ are closed (compact and without boundary), connected and oriented by the complex structure. For Riemannian metrics $h$ on $\Sigma$ and $g$ on $M$ we denote by $\nabla^h$ and $\nabla^g$ the Levi--Civita connections. Tensor products of vector spaces and vector bundles are over the complex numbers $\mathbb{C}$, unless indicated otherwise.
\end{conv}

\section{Some background on Dirac-harmonic maps}\label{sect:background}
Recall that harmonic maps $f\colon \Sigma\rightarrow M$ from a closed, oriented Riemannian manifold $(\Sigma,h)$ to a Riemannian manifold $(M,g)$ are smooth maps, defined as the critical points of the Dirichlet energy functional \cite{ES}
\begin{equation}\label{eqn:L[f]}
L[f]=\frac{1}{2}\int_\Sigma|df|^2\,\mathrm{dvol}_h,
\end{equation}
where $df$ is the differential of $f$ and $|df|^2$ is the length-squared determined by the metrics $h$ and $g$. The Euler--Lagrange equation for stationary points of $L[f]$ under variations of $f$ is
\begin{equation*}
\tau(f)=0,
\end{equation*}
where $\tau(f)$ is the tension field
\begin{equation}\label{eqn:tension field tau f}
\tau(f)=\mathrm{tr}_h(\nablaf df)=\sum_{\alpha}(\nablaf_{e_\alpha}df)(e_\alpha).
\end{equation}
Here $df$ is considered as an element of $\Omega^1(f^*TM)$ and the connection $\nablaf$ on the vector bundle $f^*TM\rightarrow\Sigma$ is induced from the Levi--Civita connection $\nabla^M=\nabla^g$. The basis $\{e_\alpha\}$ is a local orthonormal frame on $\Sigma$.
\begin{rem}
If the connection $\nabla^M$ on $M$ is compatible with $g$, but not torsion-free, then harmonic maps $f$ do not necessarily satisfy $\tau(f)=0$. 
\end{rem}
Suppose that $\Sigma$ is a spin manifold and let $\mathfrak{s}$ be a spin structure on $\Sigma$ with associated complex spinor bundle $S$ and twisted spinor bundle $S\otimes_{\mathbb{R}} f^*TM$. Note that if $V$ is a complex vector space and $W$ a real vector space, then $V\otimes_{\mathbb{R}}W$ is a complex vector space isomorphic to $V\otimes_{\mathbb{C}}W^{\mathbb{C}}$, where $W^{\mathbb{C}}$ is the complexification $W\otimes_{\mathbb{R}}\mathbb{C}$. It follows that there is a (canonical) isomorphism of complex vector bundles
\begin{equation*}
S\otimes_{\mathbb{R}} f^*TM\cong S\otimes_{\mathbb{C}}f^*T^{\mathbb{C}}M,
\end{equation*}
with $T^{\mathbb{C}}M=TM\otimes_{\mathbb{R}}\mathbb{C}$ (see \cite[Section 2]{Yang}).

The Levi--Civita connection on $\Sigma$ and the connection $\nablaf$ on $f^*TM$ yield a Dirac operator
\begin{align*}
\Dirf\colon \Gamma(S\otimes_{\mathbb{R}} f^*TM)\longrightarrow \Gamma(S\otimes_{\mathbb{R}} f^*TM).
\end{align*} 
Dirac-harmonic maps $(f,\psi)$ are defined as the critical points of the fermionic action functional \cite{CJLW1, CJLW2}
\begin{equation}\label{eqn:L[f,psi]}
L[f,\psi]=\frac{1}{2}\int_\Sigma\left(|df|^2+\langle \psi, \Dirf\psi\rangle\right)\,\mathrm{dvol}_h.
\end{equation}
A pair $(f,\psi)$ is Dirac-harmonic if and only if it is a solution of the system of coupled Euler--Lagrange equations \eqref{eqn:Dirac-harmonic map intro} (see \cite[Proposition 2.1]{CJLW2} for a proof of the formulae below):
\begin{itemize}
\item If $f$ is fixed and $\psi_t$ a variation of $\psi$ with
\begin{equation*}
\psi_0=\psi,\quad \left.\frac{d\psi_t}{dt}\right|_{t=0}=\eta\in \Gamma(S^c\otimes_{\mathbb{R}} f^*TM),
\end{equation*}
then
\begin{equation*}
\left.\frac{d}{dt}\right|_{t=0}\int_\Sigma \langle \psi_t, \Dirf\psi_t\rangle\,\mathrm{dvol}_h=2\int_\Sigma \langle\eta,\Dirf \psi\rangle\,\mathrm{dvol}_h.
\end{equation*}
\item If $f_t$ is a variation of $f$ with
\begin{equation*}
f_0=f,\quad \left.\frac{df_t}{dt}\right|_{t=0}=f^*X\in \Gamma(f^*TM),
\end{equation*}
then
\begin{equation*}
\left.\frac{d}{dt}\right|_{t=0}\int_\Sigma |df_t|^2\,\mathrm{dvol}_h=-2\int_\Sigma g(\tau(f),f^*X)\,\mathrm{dvol}_h.
\end{equation*}
Suppose in addition that $\psi_t=\sum_\mu \psi_\mu\otimes f_t^*\partial_\mu$ is a twisted spinor with time-independent components $\psi_\mu$ with respect to local coordinates $\{x_\mu\}$ (or a local frame) of $M$. If $\psi=\psi_0$ satisfies $\Dirf\psi=0$, then
\begin{equation}\label{eqn:var psi,Dpsi R(f,psi)}
\left.\frac{d}{dt}\right|_{t=0}\int_\Sigma \langle \psi_t, \Dirft\psi_t\rangle\,\mathrm{dvol}_h=2\int_\Sigma g(\mathcal{R}(f,\psi),f^*X)\,\mathrm{dvol}_h.
\end{equation}
More details on the calculation of this variation can be found in Appendix \ref{sect:appendix B}.
\end{itemize}
Dirac-harmonic maps are generalizations of harmonic maps: For the trivial spinor $\psi\equiv 0$, the curvature term $\mathcal{R}(f,\psi)$ vanishes identically and the system of equations \eqref{eqn:Dirac-harmonic map intro} reduces to the equation
\begin{equation*}
\tau(f)=0,
\end{equation*}
i.e.~$(f,0)$ is Dirac-harmonic for any harmonic map $f$.

The fermionic action functional \eqref{eqn:L[f,psi]} is motivated by theoretical physics: Suppose that $\Sigma$ is $2$-dimensional and $h,g$ Lorentzian metrics. The Dirichlet energy $L[X]$ for smooth maps $X\colon\Sigma\rightarrow M$ is (up to a normalization constant) the non-linear $\sigma$-model (Polyakov) action for bosonic strings propagating in $(M,g)$, cf.~\cite{CMPF}. 

The functional $L[X,\psi]$ for Dirac-harmonic maps is part of the supersymmetric non-linear $\sigma$-model action \cite{AF}: Choosing coordinates $\{x_\mu\}$ on an open subset $U\subset M$ we can write every spinor $\psi\in \Gamma(S\otimes_{\mathbb{R}} f^*TM)$ on $\tilde{U}=f^{-1}(U)$ as
\begin{equation*}
\psi=\sum_{\mu} \psi_\mu\otimes f^*\partial_\mu,\quad\text{with}\quad\psi_\mu\in \Gamma(\tilde{U},S).
\end{equation*}
The spinors $\psi_\mu$ are the fermionic superpartners of the scalar fields $X_\mu\in\mathcal{C}^{\infty}(\tilde{U},\mathbb{R})$, i.e.~the coordinate fields of the map $X$ (in physics, the spinors $\psi_\mu$ take values in a Grassmann algebra). 

In the supersymmetric non-linear $\sigma$-model action in \cite{AF} there is an additional curvature term which is determined by the curvature tensor $R$ of $g$ and of order $4$ in the spinor $\psi$ (cf.~\cite{CJW}). The full action for superstrings contains also a gravitino $\chi$, the superpartner of the metric $h$. This action was studied from a mathematical point of view in \cite{JKTWZ}.

\section{$\mathrm{Spin}^c$-structures on Riemann surfaces}
We discuss some background material concerning $\mathrm{Spin}^c$-structures on Riemann surfaces (more details can be found e.g.~in \cite{H, BGV, Fr, LM}).

Let $(\Sigma,j,h)$ be a closed Riemann surface with complex structure $j$ and compatible Riemannian metric $h$. The canonical $\mathrm{Spin}^c$-structure $\mathfrak{s}^c$ on $\Sigma$ has spinor bundles
\begin{align*}
S^{c+}&=\Lambda^{0,0}=\underline{\mathbb{C}}\\
S^{c-}&=\Lambda^{0,1}=K^{-1},
\end{align*} 
where $\underline{\mathbb{C}}$ is the trivial complex line bundle and $K^{-1}=\bar{K}$ is the anticanonical line bundle. The spaces of smooth sections are
\begin{align*}
\Gamma(S^{c+})&=\mathcal{C}^{\infty}(\Sigma,\mathbb{C})\\
\Gamma(S^{c-})&=\Omega^{0,1}(\Sigma).
\end{align*} 
Our notation for tangent vectors and $1$-forms of type $(1,0)$ and $(0,1)$ can be found in Appendix \ref{sect:appendix A}. The Riemannian metric $h$ extends to Hermitian bundle metrics on $T^{1,0}\oplus T^{0,1}$ and $\Lambda^{1,0}\oplus\Lambda^{0,1}$ and the choice of a local $h$-orthonormal basis $(e_1,e_2)$ of $T\Sigma$  with $e_2=je_1$ determines local unit basis vectors
\begin{equation*}
\epsilon\in T^{1,0},\quad \bar{\epsilon} \in T^{0,1}
\end{equation*}
and dual unit basis $1$-forms
\begin{equation*}
\kappa\in \Lambda^{1,0},\quad \bar{\kappa} \in \Lambda^{0,1}.
\end{equation*}
Any element $\beta\in\Lambda^{0,1}$ can be written as
\begin{equation}\label{eqn:beta 0,1 expansion}
\beta=\sqrt{2}\beta(e_1)\bar{\kappa}.
\end{equation}
The spinor bundle $S^c$ has a Clifford multiplication
\begin{equation*}
\gamma\colon T\Sigma\times S^{c\pm}\longrightarrow S^{c\mp},\quad (v,\psi)\longmapsto \gamma(v)\psi=v\cdot \psi,
\end{equation*}
that satisfies the Clifford relation
\begin{equation*}
v\cdot w\cdot \psi + w\cdot v\cdot \psi = -2h(v,w)\psi.
\end{equation*}
Let $\alpha\in (T^{\mathbb{C}}\Sigma)^*$. For $\phi\in \underline{\mathbb{C}}=\Lambda^{0,0}$ Clifford multiplication is given by
\begin{equation*}
\alpha\cdot\phi=\sqrt{2}\alpha^{0,1}\phi,
\end{equation*}
which implies
\begin{align*}
e_1\cdot \phi &= \phi\bar{\kappa}\\
e_2\cdot \phi &= i\phi\bar{\kappa}.
\end{align*}
For $\beta\in K^{-1}=\Lambda^{0,1}$ Clifford multiplication is given by contraction
\begin{equation*}
\alpha\cdot \beta = -\sqrt{2}i_{\overline{\alpha^{1,0}}}\beta,
\end{equation*}
implying
\begin{align*}
e_1\cdot \beta &= -\beta(\bar{\epsilon})\\
e_2\cdot \beta &= i\beta(\bar{\epsilon}).
\end{align*}
In particular, the volume form $\mathrm{dvol}_h=e_1^*\wedge e_2^*$ acts as
\begin{equation}\label{eqn:Clifford volume}
\mathrm{dvol}_h=\pm (-i)\quad\text{on}\quad S^{c\pm}.
\end{equation}
The decomposition of the differential
\begin{equation*}
d\colon \mathcal{C}^{\infty}(\Sigma,\mathbb{C})\longrightarrow \Omega^1(\Sigma,\mathbb{C})
\end{equation*}
into $(1,0)$- and $(0,1)$-components is denoted by
\begin{equation*}
d\phi=(d\phi)^{1,0}+(d\phi)^{0,1}=\partial\phi+\bar{\partial}\phi
\end{equation*}
and the Dolbeault operator is given by
\begin{equation*}
\bar{\partial}\colon \mathcal{C}^{\infty}(\Sigma,\mathbb{C})\longrightarrow \Omega^{0,1}(\Sigma),\quad \bar{\partial}\phi=\tfrac{1}{2}(d\phi+id\phi\circ j)
\end{equation*}
with formal adjoint
\begin{equation*}
\bar{\partial}^*\colon \Omega^{0,1}(\Sigma)\longrightarrow\mathcal{C}^{\infty}(\Sigma,\mathbb{C}).
\end{equation*}
The Levi--Civita connection $\nabla^h$ of the K\"ahler metric $h$ satisfies $\nabla^hj=j\nabla^h$ and induces a connection on $K^{-1}$ and thus a Hermitian connection on $S^c$, compatible with Clifford multiplication. We consider the associated Dirac operator 
\begin{equation*}
D\colon \Gamma(S^{c\pm})\longrightarrow \Gamma(S^{c\mp}).
\end{equation*}
\begin{lem}[cf.~\cite{H}]
The Dirac operator $D$ is equal to the Dolbeault--Dirac operator
\begin{equation*}
\sqrt{2}(\bar{\partial}+\bar{\partial}^*).
\end{equation*}
The Riemann--Roch theorem implies for the index
\begin{equation*}
\mathrm{ind}_{\mathbb{C}}D=1-g_\Sigma,
\end{equation*}
where $g_\Sigma$ is the genus of $\Sigma$.
\end{lem}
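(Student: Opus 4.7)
The plan is to identify $D$ with $\sqrt{2}(\bar{\partial}+\bar{\partial}^*)$ by computing $D$ directly on each chirality summand of $S^c = S^{c+}\oplus S^{c-}$ using the Clifford multiplication formulas already recorded in the text, then to deduce the index from Riemann--Roch.

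First I would treat the positive chirality piece. For $\phi\in\Gamma(S^{c+}) = \mathcal{C}^\infty(\Sigma,\mathbb{C})$, the Spin$^c$-connection on the trivial bundle $\underline{\mathbb{C}}$ reduces to ordinary differentiation, so $D\phi = e_1\cdot e_1(\phi) + e_2\cdot e_2(\phi)$. Substituting $e_1\cdot\phi = \phi\bar{\kappa}$ and $e_2\cdot\phi = i\phi\bar{\kappa}$ gives
\begin{equation*}
D\phi = \bigl(e_1(\phi) + ie_2(\phi)\bigr)\bar{\kappa}.
\end{equation*}
On the other hand, the definition $\bar{\partial}\phi = \tfrac{1}{2}(d\phi + id\phi\circ j)$ yields $\bar{\partial}\phi(e_1) = \tfrac{1}{2}(e_1(\phi)+ie_2(\phi))$, and the expansion \eqref{eqn:beta 0,1 expansion} then shows $\bar{\partial}\phi = \tfrac{1}{\sqrt{2}}(e_1(\phi)+ie_2(\phi))\bar{\kappa}$. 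Comparing, $D\phi = \sqrt{2}\,\bar{\partial}\phi$.

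For the negative chirality piece, rather than unpacking the induced Hermitian connection on $K^{-1}$ and its interaction with contraction, I would invoke formal self-adjointness of $D$ (which follows from the compatibility of the induced connection with Clifford multiplication, already noted in the text). For $\phi\in\Gamma(S^{c+})$ and $\beta\in\Gamma(S^{c-})$,
\begin{equation*}
\langle\phi,D\beta\rangle_{L^2} = \langle D\phi,\beta\rangle_{L^2} = \sqrt{2}\,\langle\bar{\partial}\phi,\beta\rangle_{L^2} = \sqrt{2}\,\langle\phi,\bar{\partial}^*\beta\rangle_{L^2},
\end{equation*}
and since this holds for all $\phi$, we conclude $D\beta = \sqrt{2}\,\bar{\partial}^*\beta$. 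Together with the positive-chirality calculation this proves $D = \sqrt{2}(\bar{\partial}+\bar{\partial}^*)$.

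The index follows immediately: since $D$ interchanges the two chiralities, $\ker D^+ = \ker\bar{\partial} = H^0(\Sigma,\mathcal{O})$ and (by Hodge theory on the Dolbeault complex) $\ker D^- = \ker\bar{\partial}^* \cong H^1(\Sigma,\mathcal{O})$, so $\mathrm{ind}_{\mathbb{C}}D = \chi(\Sigma,\mathcal{O}) = 1-g_\Sigma$ by Hirzebruch--Riemann--Roch (equivalently, by the classical Riemann--Roch theorem on curves). The only potential obstacle is the self-adjointness step, which avoids a direct computation on $K^{-1}$; this is a standard consequence of the compatibility of the Spin$^c$-connection with Clifford multiplication and requires no additional work here.
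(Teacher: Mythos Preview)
Your proof is correct and follows essentially the same approach as the paper: compute $D$ directly on positive spinors using the Clifford multiplication formulas to obtain $D\phi=\sqrt{2}\,\bar{\partial}\phi$, then invoke formal self-adjointness of $D$ to handle the negative chirality and conclude $D=\sqrt{2}(\bar{\partial}+\bar{\partial}^*)$. The paper's proof is terser on the index statement, simply citing Riemann--Roch, whereas you spell out $\ker D^\pm$ explicitly, but this is a matter of presentation rather than a different argument.
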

\begin{proof}
Let $\phi\in \mathcal{C}^{\infty}(\Sigma,\mathbb{C})$ be a positive spinor. On $\mathcal{C}^{\infty}(\Sigma,\mathbb{C})$ the connection is just the differential $d$, hence
\begin{align*}
D\phi&=e_1\cdot d\phi(e_1)+e_2\cdot d\phi(e_2)=(d\phi(e_1)+id\phi(e_2))\bar{\kappa}\\
&=2\bar{\partial}\phi(e_1)\bar{\kappa}=\sqrt{2}\bar{\partial}\phi,
\end{align*}
where the last step follows from equation \eqref{eqn:beta 0,1 expansion}. Thus 
\begin{align*}
D\colon \mathcal{C}^{\infty}(\Sigma,\mathbb{C})&\longrightarrow \Omega^{0,1}(\Sigma) \\
\phi&\longmapsto \sqrt{2}\bar{\partial}\phi. 
\end{align*}
Since the Dirac operator is formally self-adjoint, the claim follows.
\end{proof}
\begin{rem}\label{rem:Riemann surface spin str}
Riemann surfaces are spin, hence we can choose a spin structure $\mathfrak{s}$ on $\Sigma$, which is equivalent to the choice of a holomorphic square root $K^{\scriptscriptstyle\frac{1}{2}}$ of the canonical bundle $K$ (see \cite{At, H}). The spinor bundles of $\mathfrak{s}$ are
\begin{align*}
S^{+}&=K^{\scriptscriptstyle\frac{1}{2}}\\
S^{-}&=K^{-\scriptscriptstyle\frac{1}{2}}
\end{align*} 
and the spinor bundle of the canonical $\mathrm{Spin}^c$-structure is obtained by twisting
\begin{equation*}
S^c=S\otimes K^{-\scriptscriptstyle\frac{1}{2}}.
\end{equation*}
There is another $\mathrm{Spin}^c$-structure with spinor bundle
\begin{equation*}
\bar{S}^c=S\otimes K^{\scriptscriptstyle\frac{1}{2}},
\end{equation*}
i.e. 
\begin{align*}
\bar{S}^{c+}&=K\\
\bar{S}^{c-}&=\underline{\mathbb{C}}.
\end{align*}
\end{rem}
\begin{rem}\label{rem:twist Sc with L}
Let $L\rightarrow\Sigma$ be a complex line bundle with a Hermitian bundle metric. Then there is a twisted $\mathrm{Spin}^c$-structure $\mathfrak{s}^c\otimes L$ with spinor bundles
\begin{align*}
S^{c+}\otimes L&=L\\
S^{c-}\otimes L&=K^{-1}\otimes L.
\end{align*}
A connection $\nabla^B$ on $L$, compatible with the Hermitian bundle metric, together with the Levi--Civita connection $\nabla^h$ yields a Hermitian connection on $S^c\otimes L$ and a Dirac operator
\begin{equation*}
D_B\colon \Gamma(S^{c\pm}\otimes L)\longrightarrow \Gamma(S^{c\mp}\otimes L).
\end{equation*}
With the Dolbeault operator
\begin{equation*}
\bar{\partial}_B\colon \Gamma(L)\longrightarrow \Omega^{0,1}(L),\quad \bar{\partial}_B\phi=\tfrac{1}{2}(\nabla^B\phi+i\nabla^B\phi\circ j)
\end{equation*}
the Dirac operator $D_B$ is equal to the Dolbeault--Dirac operator
\begin{equation*}
\sqrt{2}(\bar{\partial}_B+\bar{\partial}_B^*).
\end{equation*}
\end{rem}

\section{Dirac operator along maps and $J$-holomorphic curves}
Let $(\Sigma,j,h)$ be a Riemann surface and $(M,J,g,\omega)$ an almost Hermitian manifold of real dimension $2n$ with almost complex structure $J$, Riemannian metric $g$ and non-degenerate $2$-form $\omega$, related by
\begin{align*}
g(Jx,Jy)&=g(x,y)\\
\omega(x,y)&=g(Jx,y)\quad\forall x,y\in TM.
\end{align*}
We fix a Hermitian connection $\nabla^M$ on $TM$, i.e.~an affine connection such that $\nabla^M g=0$ and $\nabla^M J=0$. For a general almost Hermitian manifold the connection $\nabla^M$ has non-zero torsion. The Hermitian connection $\nabla^M$ can be chosen torsion-free, hence equal to the Levi--Civita connection $\nabla^g$ of $g$, if and only if $(M,J,g,\omega)$ is K\"ahler.

Let $f\colon \Sigma \rightarrow M$ be a smooth map and consider the pullback $f^*TM\rightarrow \Sigma$ of the tangent bundle $TM$. If $X$ is vector field on $M$, then the pullback
\begin{equation*}
f^*X\colon \Sigma \longrightarrow f^*TM,\quad z\longmapsto X_{f(z)}
\end{equation*}
is a section of $f^*TM$. There is a unique Hermitian connection $\nablaf$ on $f^*TM$ so that
\begin{equation*}
\nablaf_V(f^*X)=f^*(\nabla^M_{df(V)}X)\quad\forall X\in \mathfrak{X}(M), V\in T\Sigma.
\end{equation*} 
We consider the twisted spinor bundle 
\begin{equation*}
S^c\otimes_{\mathbb{R}} f^*TM\cong S^c\otimes f^*T^{\mathbb{C}}M
\end{equation*}
on $\Sigma$. The Riemannian metric $g$ extends to a Hermitian bundle metric $\langle\cdot\,,\cdot\rangle$ on $T^{\mathbb{C}}M$. There is a decomposition into orthogonal $\pm i$-eigenspaces of the complex linear extension of $J$,
\begin{equation*}
T^{\mathbb{C}}M =T^{1,0}M\oplus T^{0,1}M
\end{equation*}
and a corresponding decomposition of $S^c\otimes_{\mathbb{R}} f^*TM$ into two twisted complex spinor bundles (cf.~\cite[Section 3]{Yang})
\begin{equation}\label{eqn:two spinor subbundles Sc fTM}
S^c\otimes_{\mathbb{R}} f^*TM=(S^c \otimes f^*T^{1,0}M)\oplus (S^c\otimes f^*T^{0,1}M)
\end{equation}
(the tensor products on the right are over $\mathbb{C}$). The connection $\nabla^M$ extends to a Hermitian connection on $T^{\mathbb{C}}M$ which preserves both complex subbundles $T^{1,0}M$ and $T^{0,1}M$. The connections $\nabla^h$ and $\nablaf$ thus define a Hermitian connection on $S^c\otimes_{\mathbb{R}} f^*TM$, also denoted by $\nablaf$, which preserves both complex spinor bundles on the right hand side of equation \eqref{eqn:two spinor subbundles Sc fTM}.
\begin{defn}[cf.~\cite{CJLW2}]
The associated twisted Dirac operator
\begin{align*}
\Dirf\colon \Gamma(S^{c\pm}\otimes_{\mathbb{R}} f^*TM)&\longrightarrow \Gamma(S^{c\mp}\otimes_{\mathbb{R}} f^*TM)\\
\psi&\longmapsto \sum_{\alpha=1}^2e_\alpha\cdot \nabla_{e_\alpha}^f\psi
\end{align*}
is called the {\em Dirac operator along the map $f$}. Under the splitting in equation \eqref{eqn:two spinor subbundles Sc fTM} the Dirac operator $\Dirf$ decomposes into two twisted Dirac operators
\begin{align*}
\Dirfp\colon \Gamma(S^{c\pm}\otimes f^*T^{1,0}M)&\longrightarrow \Gamma(S^{c\mp}\otimes f^*T^{1,0}M)\\
\Dirfpp\colon \Gamma(S^{c\pm}\otimes f^*T^{0,1}M)&\longrightarrow \Gamma(S^{c\mp}\otimes f^*T^{0,1}M).
\end{align*}
\end{defn}
Since the connection $\nablaf$ on the twisted spinor bundle is obtained from the Levi--Civita connection $\nabla^h$ on $\Sigma$, the Dirac operator $\Dirf$ is formally self-adjoint. We consider the Dolbeault operators for the complex vector bundles $f^*T^{1,0}M$ and $f^*T^{0,1}M$,
\begin{align*}
\Dolfp\colon \Gamma(f^*T^{1,0}M)&\longrightarrow \Omega^{0,1}(f^*T^{1,0}M)\\
\Dolfpp\colon \Gamma(f^*T^{0,1}M)&\longrightarrow \Omega^{0,1}(f^*T^{0,1}M)
\end{align*}
defined by
\begin{align*}
\Dolfp\psi&=\tfrac{1}{2}(\nablaf\psi+J\circ\nablaf\psi\circ j)\\
\Dolfpp\psi&=\tfrac{1}{2}(\nablaf\psi-J\circ\nablaf\psi\circ j).
\end{align*}
The formal adjoints are denoted by $\Dolfps$ and $\Dolfpps$.
\begin{proof}[Proof of Proposition \ref{prop:twisted Dirac Df formula}]
Let $\psi\in \Gamma(f^*T^{1,0}M)$. Then
\begin{align*}
\Dirfp\psi&=e_1\cdot \nabla_{e_1}^f\psi+e_2\cdot \nabla_{e_2}^f\psi=\bar{\kappa}\otimes (\nabla_{e_1}^f\psi+i\nabla_{e_2}^f\psi)=\bar{\kappa}\otimes (\nabla_{e_1}^f\psi+J\nabla_{je_1}^f\psi)\\
&=\sqrt{2}\Dolfp\psi.
\end{align*}
This implies the claim for the Dirac operator $\Dirfp$, because it is self-adjoint. The claim for $\Dirfpp$ follows similarly.
\end{proof}
Recall that a $J$-holomorphic curve is a smooth map $f\colon \Sigma\rightarrow M$ such that
\begin{equation*}
df\circ j = J\circ df,
\end{equation*}
where 
\begin{equation*}
df\colon T\Sigma\longrightarrow TM
\end{equation*}
is the differential. With the non-linear Cauchy--Riemann operator
\begin{equation*}
\bar{\partial}_Jf=\tfrac{1}{2}(df+J\circ df\circ j),
\end{equation*}
the map $f$ is a $J$-holomorphic curve if and only if 
\begin{equation*}
\bar{\partial}_Jf=0.
\end{equation*}
\begin{cor}\label{cor:J integrable ker Df}
Suppose that $(M,J,g,\omega)$ is K\"ahler, $\nabla^M=\nabla^g$ the Levi--Civita connection and $f\colon \Sigma\rightarrow M$ a $J$-holomorphic curve.
\begin{enumerate} 
\item $T^{1,0}M\cong (TM,J)$ and $f^*T^{1,0}M$ is a holomorphic vector bundle over $\Sigma$.
\item\label{item2:J int} $\Dolfp$ is equal to the linearization $L_f\bar{\partial}_J$ of the non-linear Cauchy--Riemann operator $\bar{\partial}_J$ in $f$.
\item\label{item3:J int} The kernel of $\Dirfp$ is given by
\begin{align*}
\mathrm{ker}\,\Dirfp&= \mathrm{ker}\,\Dolfp\oplus \mathrm{ker}\,\Dolfps\cong \mathrm{ker}\,L_f\bar{\partial}_J\oplus \mathrm{coker}\,L_f\bar{\partial}_J\\
&\cong H^0(\Sigma, f^*T^{1,0}M)\oplus H^1(\Sigma, f^*T^{1,0}M).
\end{align*}
\item\label{item4:J int} The kernel of $\Dirfpp$ is given by
\begin{align*}
\mathrm{ker}\,\Dirfpp&= \mathrm{ker}\,\Dolfpp\oplus \mathrm{ker}\,\Dolfpps\\
&\cong H^1(\Sigma, K_\Sigma\otimes f^*T^{1,0}M)^*\oplus H^0(\Sigma, K_\Sigma\otimes f^*T^{1,0}M)^*.
\end{align*}
\end{enumerate}
\end{cor}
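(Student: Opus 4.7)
The plan is to prove the four items in order. Items (1) and (2) are direct computations that feed into the kernel-splitting arguments for (3) and (4); item (4) additionally requires identifying $(f^*T^{0,1}M,\Dolfpp)$ as a holomorphic bundle and then applying Serre duality.

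For item (1), in the K\"ahler case $\nabla^g$ preserves $J$ and hence the decomposition $T^{\mathbb{C}}M=T^{1,0}M\oplus T^{0,1}M$; its $(0,1)$-part defines a holomorphic structure on $T^{1,0}M$, while $v\mapsto\tfrac{1}{2}(v-iJv)$ furnishes the $\mathbb{C}$-linear isomorphism $(TM,J)\cong T^{1,0}M$. Since $f$ is $J$-holomorphic, pulling back preserves holomorphicity, so $f^*T^{1,0}M$ is a holomorphic vector bundle over $\Sigma$. For item (2), I would differentiate $\bar\partial_Jf_t=\tfrac{1}{2}(df_t+J\circ df_t\circ j)$ along a variation with $\partial_tf_t|_{t=0}=\xi\in\Gamma(f^*T^{1,0}M)$. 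Torsion-freeness of $\nabla^g$ lets me commute the covariant $t$-derivative through $df$, and $\nabla^gJ=0$ kills the would-be term $(\nabla_\xi J)\circ df\circ j$, leaving $L_f\bar\partial_J(\xi)=\tfrac{1}{2}(\nablaf\xi+J\circ\nablaf\xi\circ j)$, which is exactly the definition of $\Dolfp\xi$.

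For item (3), Proposition~\ref{prop:twisted Dirac Df formula} gives $\Dirfp=\sqrt{2}(\Dolfp+\Dolfps)$ with $\Dolfp\colon\Gamma(f^*T^{1,0}M)\to\Omega^{0,1}(f^*T^{1,0}M)$ and $\Dolfps$ going the other way. A pair $(\phi,\alpha)\in\Gamma(S^c\otimes f^*T^{1,0}M)$ therefore lies in $\mathrm{ker}\,\Dirfp$ iff $\Dolfp\phi=0$ and $\Dolfps\alpha=0$, so the kernel splits as a direct sum. Item (2) identifies $\mathrm{ker}\,\Dolfp$ with $\mathrm{ker}\,L_f\bar\partial_J=\mathrm{Def}_J(f)$; Hodge theory identifies $\mathrm{ker}\,\Dolfps$ with $\mathrm{coker}\,L_f\bar\partial_J=\mathrm{Obs}_J(f)$; and Dolbeault's theorem rewrites these as $H^0(\Sigma,f^*T^{1,0}M)$ and $H^1(\Sigma,f^*T^{1,0}M)$.

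For item (4), the same kernel-splitting argument gives $\mathrm{ker}\,\Dirfpp=\mathrm{ker}\,\Dolfpp\oplus\mathrm{ker}\,\Dolfpps$. The main obstacle is to identify the holomorphic structure on $f^*T^{0,1}M$ defined by $\Dolfpp$. I would argue that the complex-bilinear extension of the K\"ahler metric yields a non-degenerate $\nabla^g$-parallel pairing $T^{1,0}M\otimes T^{0,1}M\to\mathbb{C}$, hence a $\mathbb{C}$-linear isomorphism $T^{0,1}M\cong(T^{1,0}M)^*$ that intertwines $\Dolfpp$ with the $\bar\partial$-operator of the dual holomorphic bundle; checking that the Chern connection on $T^{1,0}M$ induces the dual connection on $T^{0,1}M$ under this identification is the central technical step. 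Granting this, Dolbeault's theorem gives $\mathrm{ker}\,\Dolfpp\cong H^0(\Sigma,(f^*T^{1,0}M)^*)$ and $\mathrm{ker}\,\Dolfpps\cong H^1(\Sigma,(f^*T^{1,0}M)^*)$, and Serre duality $H^i(\Sigma,E^*)\cong H^{1-i}(\Sigma,K_\Sigma\otimes E)^*$ with $E=f^*T^{1,0}M$ converts these into the stated expressions.
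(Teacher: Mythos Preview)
Your proposal is correct and follows essentially the same route as the paper: the paper's proof is extremely terse---it cites \cite[p.~28]{McS1} for (\ref{item2:J int}), records $\mathrm{ker}\,\Dolfp=H^{0,0}(\Sigma,f^*T^{1,0}M)$ and $\mathrm{coker}\,\Dolfp=H^{0,1}(\Sigma,f^*T^{1,0}M)$ for (\ref{item3:J int}), and simply invokes Serre duality for (\ref{item4:J int})---whereas you unpack each step, in particular the computation behind (\ref{item2:J int}) and the holomorphic identification $f^*T^{0,1}M\cong (f^*T^{1,0}M)^*$ via the $\mathbb{C}$-bilinear extension of $g$ that makes Serre duality applicable. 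Your hedged ``central technical step'' is in fact routine: parallelism of the pairing under $\nabla^g$ immediately forces compatibility of the induced $\bar\partial$-operators.
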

\begin{proof}
The claim in (\ref{item2:J int}) follows from \cite[p.~28]{McS1}. For the formula in (\ref{item3:J int}), note that
\begin{equation*}
\mathrm{ker}\,\Dolfp=H^{0,0}(\Sigma, f^*T^{1,0}M),\quad \mathrm{coker}\,\Dolfp=H^{0,1}(\Sigma, f^*T^{1,0}M).
\end{equation*}
The claim in (\ref{item4:J int}) follows with Serre duality.
\end{proof}
\begin{rem}
For a non-integrable almost complex structure $J$, the operators $\Dolfp$ and $L_f\bar{\partial}_J$ differ by an operator of order $0$, cf.~\cite[p.~28]{McS1}.
\end{rem}
\begin{rem}[cf.~\cite{McS1, McS2, TF}]\label{rem:discussion J integrable regular}
For an arbitrary smooth map $f\colon \Sigma\rightarrow M$, smooth sections of $f^*TM$ correspond to infinitesimal deformations of $f$. Suppose that $f$ is $J$-holomorphic. Then elements of
\begin{equation*}
\mathrm{Def}_J(f)=\mathrm{ker}\,L_f\bar{\partial}_J
\end{equation*}
correspond to infinitesimal deformations of $f$ through $J$-holomorphic curves. The vector space 
\begin{equation*}
\mathrm{Obs}_J(f)=\mathrm{coker}\,L_f\bar{\partial}_J
\end{equation*}
is called the obstruction space and the pair $(f,J)$ is called regular if $\mathrm{Obs}_J(f)=0$, i.e.~$L_f\bar{\partial}_J$ is surjective. If $(f,J)$ is regular, then $(f',J)$ is regular for all $J$-holomorphic curves $f'\colon\Sigma\rightarrow M$ in a small neighbourhood of $f$ (inside the space of all smooth maps $\Sigma\rightarrow M$). In this case, it follows that the local moduli space, i.e.~the set of all $J$-holomorphic curves $f'$ near $f$, is a smooth manifold of real dimension $2\mathrm{ind}_{\mathbb{C}}\Dirf$ with tangent space in $f$ given by $\mathrm{Def}_J(f)$.
\end{rem}
\begin{rem}
For a twisted $\mathrm{Spin}^c$-structure $S^c\otimes L$ with complex line bundle $L\rightarrow \Sigma$, as in Remark \ref{rem:twist Sc with L}, we can consider the spinor bundle $S^c\otimes L\otimes_{\mathbb{R}} f^*TM$. The choice of a Hermitian connection $B$ on $L$ then defines a connection $\nabla^{\scriptscriptstyle f\otimes B}$ on $S^c\otimes L\otimes_{\mathbb{R}} f^*TM$ with Dirac operator
\begin{equation*}
\Dirf_B\colon \Gamma(S^{c\pm}\otimes L\otimes_{\mathbb{R}} f^*TM)\longrightarrow \Gamma(S^{c\mp}\otimes L\otimes_{\mathbb{R}} f^*TM)
\end{equation*}
given by a generalization of Proposition \ref{prop:twisted Dirac Df formula}.
\end{rem}

\section{Relation to topological string theory}\label{sect:TQFT}

Dirac-harmonic maps on Riemann surfaces $\Sigma$ with the canonical $\mathrm{Spin}^c$-structure are related to topological string theory, introduced by Edward Witten \cite{W1, W2}. We combine the $\mathrm{Spin}^c$ spinor bundles 
\begin{align*}
S^c&=\underline{\mathbb{C}}\oplus K^{-1}\\
\bar{S}^c&=K\oplus \underline{\mathbb{C}}
\end{align*}
on the Riemann surface to a twisted complex spinor bundle
\begin{equation*}
\Delta=(S^c\oplus\bar{S}^c)\otimes f^*T^{\mathbb{C}}M
\end{equation*}
with Weyl spinor bundles
\begin{align*}
\Delta^+&=T_f^{1,0}M\oplus (K\otimes T_f^{1,0}M) \oplus T_f^{0,1}M\oplus (K\otimes T_f^{0,1}M)\\
\Delta^-&=(K^{-1}\otimes T_f^{1,0}M) \oplus T_f^{1,0}M \oplus (K^{-1}\otimes T_f^{0,1}M)\oplus T_f^{0,1}M.
\end{align*}
Here $(M,J,g,\omega)$ is a K\"ahler manifold of complex dimension $n$ and the pullback $f^*$ of $T^{1,0}M$ and $T^{0,1}M$ is abbreviated by an index $f$.
\begin{defn}
We define the following subbundles\footnote{We follow the conventions in \cite{W2}.}:
\begin{description}
\item[$\bm{+}$ {\bf twist}] 
\begin{align*}
\Delta^+_{(+)}&=T_f^{1,0}M\oplus (K\otimes T_f^{0,1}M)\\
\Delta^-_{(+)}&=T_f^{1,0}M\oplus (K^{-1}\otimes T_f^{0,1}M).
\end{align*}
\item[$\bm{-}$ {\bf twist}] 
\begin{align*}
\Delta^+_{(-)}&=(K\otimes T_f^{1,0}M)\oplus T_f^{0,1}M\\
\Delta^-_{(-)}&=(K^{-1}\otimes T_f^{1,0}M)\oplus T_f^{0,1}M.
\end{align*}
\end{description}
We also define the following spinor bundles:
\begin{description}
\item[{\bf A-model}] 
\begin{align*}
\Delta_A &= \Delta^+_{(+)}\oplus \Delta^-_{(-)}\\
\text{with sections} &\quad (\chi,\psi_z',\psi_{\bar{z}},\chi')
\end{align*}
\item[{\bf B-model}] 
\begin{align*}
\Delta_B &= \Delta^+_{(-)}\oplus \Delta^-_{(-)}\\
\text{with sections} &\quad (\rho_z,\tfrac{1}{2}(\eta'+\theta'),\rho_{\bar{z}},\tfrac{1}{2}(\eta'-\theta'))
\end{align*}
\end{description}
\end{defn}
To explain these definitions we consider the action functional \eqref{eqn:L[f,psi]}
\begin{equation*}
L[f,\psi]=\frac{1}{2}\int_\Sigma\left(|df|^2+\langle \psi, \Dirf\psi\rangle\right)\,\mathrm{dvol}_h.
\end{equation*}
The complete supersymmetric $\sigma$-model action functional also contains the quartic spinor term involving the Riemann curvature tensor of $g$, mentioned at the end of Section \ref{sect:background}. We ignore this term in the following discussion. 

We first consider the case where $(M,g)$ is a Riemannian manifold and the spinor a section $\psi\in \Gamma(S\otimes_{\mathbb{R}}f^*TM)$ for the spinor bundle $S$ of a spin structure on $\Sigma$. One allows a slightly more general situation where the Weyl spinor bundles come from different spin structures: Let $\Kh$ and $\Kbh$ be holomorphic square roots of $K$ and $\bar{K}$, not necessarily related by $\Kbh=\overline{\Kh}$. Then
\begin{equation*}
\psi_+\in\Gamma(\Kh\otimes_{\mathbb{R}} f^*TM),\quad \psi_-\in\Gamma(\Kbh\otimes_{\mathbb{R}} f^*TM).
\end{equation*}
The non-linear $\sigma$-model has $N=2$ supersymmetry generated by spinors
\begin{equation*}
\epsilon_-\in\Gamma(\Kmh),\quad \epsilon_+\in \Gamma(\Kbmh),
\end{equation*}
which are holomorphic and antiholomorphic sections of $\Kmh$ and $\Kbmh$, respectively.

Suppose that $(M,J,g,\omega)$ is a K\"ahler manifold of complex dimension $n$. We can decompose $T^{\mathbb{C}}M$ into the $(1,0)$- and $(0,1)$-part and denote the Weyl spinors by
\begin{align*}
(\psi_+,\psi_+')&\in (\Kh\otimes T_f^{1,0}M)\oplus (\Kh\otimes T_f^{0,1}M)\\
(\psi_-,\psi_-')&\in (\Kbh\otimes T_f^{1,0}M)\oplus (\Kbh\otimes T_f^{0,1}M).
\end{align*}
The non-linear $\sigma$-model now has $N=(2,2)$ supersymmetry generated by (anti)-holomorphic sections
\begin{equation}\label{eqn:alpha susy}
\alpha_-,\tilde{\alpha}_-\in\Gamma(\Kmh),\quad \alpha_+,\tilde{\alpha}_+\in \Gamma(\Kbmh).
\end{equation}
For a Riemann surface of genus $g_\Sigma\neq 1$ the canonical and anticanonical bundle are non-trivial, hence the sections in \eqref{eqn:alpha susy} have zeroes. In particular, the only covariantly constant sections, corresponding to global (rigid) supersymmetries, are identically zero. 

This can be remedied with the topological $+$ and $-$ twists, i.e.~using the $\mathrm{Spin}^c$-spinor bundle $S^c$ instead of the spinor bundle $S$. In the A-model the sections
\begin{equation*}
\alpha_-,\tilde{\alpha}_+\in\Gamma(\underline{\mathbb{C}})
\end{equation*}   
and in the B-model the sections
\begin{equation*}
\tilde{\alpha}_-,\tilde{\alpha}_+\in\Gamma(\underline{\mathbb{C}})
\end{equation*}
can be chosen covariantly constant. These sections yield a global fermionic symmetry $Q$ of the non-linear $\sigma$-model for arbitrary genus $g_\Sigma$, which implies that the A-model and B-model (for suitable target spaces) define topological quantum field theories (TQFTs).

We consider the A-model spinor bundle in more detail. The vector bundle $\Delta_A$ can be decomposed as
\begin{equation*}
\Delta_A = (S^c\otimes T^{1,0}_fM)\oplus (\bar{S}^c\otimes T^{0,1}_fM)
\end{equation*}
with sections
\begin{equation*}
(\Psi,\Psi'),\quad \Psi=(\chi,\psi_{\bar{z}}),\,\Psi'=(\psi_z',\chi').
\end{equation*}
The fermionic action \eqref{eqn:L[f,psi]} for the spinor bundle $\Delta_A$ can then be written as
\begin{equation*}
L_A[f,\Psi,\Psi']=\frac{1}{2}\int_\Sigma\left(|df|^2+\langle \Psi, \Dirfp\Psi\rangle+\langle \Psi', \Dirbfpp\Psi'\rangle\right)\,\mathrm{dvol}_h.
\end{equation*}
There is a complex antilinear bundle isomorphism
\begin{equation*}
S^c\otimes T^{1,0}_fM\stackrel{\cong}{\longrightarrow}\bar{S}^c\otimes T^{0,1}_fM
\end{equation*}
given by complex conjugation and exchanging positive and negative Weyl spinors, which induces a corresponding isomorphism between $\mathrm{ker}\,\Dirfp$ and $\mathrm{ker}\,\Dirbfpp$. Defining the numbers of zero modes
\begin{align*}
a&=\dim_{\mathbb{C}}\{(\chi,\chi')\mid \Dirfp\chi=0=\Dirbfpp\chi'\}\\
b&=\dim_{\mathbb{C}}\{(\psi_{\bar{z}},\psi_z')\mid \Dirfp\psi_{\bar{z}}=0=\Dirbfpp\psi_z'\},
\end{align*}
the index of the Dirac operator $\Dirfp$ is related to the so-called ghost number or $\mathrm{U}(1)_A$-anomaly by
\begin{equation*}
w=a-b=2\mathrm{ind}_{\mathbb{C}}\Dirfp=2n(1-g_\Sigma)+2c_1(A).
\end{equation*}

\section{Dirac-harmonic maps to K\"ahler manifolds}\label{sect:curvature term Rfpsi}
Let $(\Sigma,j,h)$ be a Riemann surface and $(M,J,g,\omega)$ a K\"ahler manifold of complex dimension $n$ with Levi--Civita connection $\nabla^M=\nabla^g$.

Let $f\colon \Sigma\rightarrow M$ be a smooth map and $\psi\in \Gamma(S^c\otimes_{\mathbb{R}} f^*TM)$ a twisted spinor. Then $(f,\psi)$ is called a Dirac-harmonic map if it is a critical point of the fermionic action functional \eqref{eqn:L[f,psi]} (with the spinor bundle $S$ replaced by $S^c$). The same proof as in \cite[Proposition 2.1]{CJLW2} for spin structures shows that a pair $(f,\psi)$ is a Dirac-harmonic map if and only if it satisfies the Euler--Lagrange equations \eqref{eqn:Dirac-harmonic map intro}.
\begin{defn}\label{defn:XA}
For $A\in H_2(M;\mathbb{Z})$ let
\begin{equation*}
\mathcal{X}_A=\mathrm{Map}(\Sigma,M;A)
\end{equation*}
be the set of all smooth maps $f\colon\Sigma\rightarrow M$ with $f_*[\Sigma]=A$, where $[\Sigma]\in H_2(\Sigma;\mathbb{Z})$ is the generator determined by the complex orientation of $\Sigma$.
\end{defn}
\begin{prop}\label{prop:J-hol tau(f) 0}
If $f\colon \Sigma\rightarrow M$ is $J$-holomorphic, then $f$ is harmonic and satisfies $\tau(f)=0$. More precisely, the absolute minima of the Dirichlet energy $L[f]$ on $\mathcal{X}_A$ are given by the $J$-holomorphic curves $f$ with $f_*[\Sigma]=A$. The Dirichlet energy of a $J$-holomorphic curve $f$ has value
\begin{equation*}
L[f]=\langle \omega,[A]\rangle,
\end{equation*}
where $\omega$ is the K\"ahler form on $M$. 
\end{prop}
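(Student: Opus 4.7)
The plan is to prove this via the classical ``energy identity'' for maps to an almost Hermitian target, which splits the Dirichlet energy into a non-negative part measuring the failure of $J$-holomorphicity and a purely topological part.

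First, I would establish the pointwise identity
\begin{equation*}
\tfrac{1}{2}|df|^2\,\mathrm{dvol}_h = |\bar{\partial}_Jf|^2\,\mathrm{dvol}_h + f^*\omega.
\end{equation*}
To verify this, pick a local $h$-orthonormal frame $(e_1, e_2=je_1)$ on $\Sigma$ and compute $\bar{\partial}_Jf(e_1) = \tfrac{1}{2}(df(e_1)+J\,df(e_2))$ and $\bar{\partial}_Jf(e_2) = \tfrac{1}{2}(df(e_2)-J\,df(e_1))$. Expanding $|\bar{\partial}_Jf|^2 = |\bar{\partial}_Jf(e_1)|^2+|\bar{\partial}_Jf(e_2)|^2$ with respect to $g$ and using the skew relation $g(JX,Y)=-g(X,JY)$ produces the mixed terms $-2g(J\,df(e_1),df(e_2)) = -2\omega(df(e_1),df(e_2))$, which equals $-2f^*\omega(e_1,e_2)$; since $(e_1,e_2)$ is orthonormal, $f^*\omega = f^*\omega(e_1,e_2)\,\mathrm{dvol}_h$, and the identity follows. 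This is the one genuinely computational step, but it is purely linear algebra and I regard it as the technical heart of the argument.

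Next, I would integrate over $\Sigma$ to obtain
\begin{equation*}
L[f] = \int_\Sigma |\bar{\partial}_Jf|^2\,\mathrm{dvol}_h + \int_\Sigma f^*\omega.
\end{equation*}
Since $\omega$ is closed, the second integral depends only on the homology class of $f$: for $f\in\mathcal{X}_A$ we have
\begin{equation*}
\int_\Sigma f^*\omega = \langle [\omega], f_*[\Sigma]\rangle = \langle \omega, A\rangle.
\end{equation*}
Therefore $L[f]\geq \langle\omega,A\rangle$ on $\mathcal{X}_A$, with equality if and only if $\bar{\partial}_Jf\equiv 0$, i.e.\ if and only if $f$ is $J$-holomorphic. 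In the latter case we read off $L[f]=\langle\omega,A\rangle$.

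Finally, for the tension field statement, observe that any smooth variation $f_t$ of $f$ is a homotopy, hence preserves $f_*[\Sigma]$, so $\mathcal{X}_A$ is a union of path components of $\mathrm{Map}(\Sigma,M)$ and every variation of $f$ stays in $\mathcal{X}_A$. A $J$-holomorphic curve is therefore an absolute minimum of $L$, in particular a critical point, so $\tau(f)=0$ by the Euler--Lagrange equation $\tfrac{d}{dt}|_{t=0}L[f_t] = -\int_\Sigma g(\tau(f),f^*X)\,\mathrm{dvol}_h$ recalled in Section~\ref{sect:background}, applied to arbitrary variation vector fields $f^*X\in\Gamma(f^*TM)$. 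The main (and essentially only) obstacle is the careful sign- and coefficient-bookkeeping in the pointwise identity of the first step; everything else is a formal consequence.
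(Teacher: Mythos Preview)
Your proof is correct and follows essentially the same route as the paper's: the energy identity you spell out is precisely the content of \cite[Lemma~2.2.1]{McS2}, which the paper cites for the minimality and energy-value claims. The only minor difference is that the paper obtains $\tau(f)=0$ by a direct computation of the tension field (using $df(e_2)=J\,df(e_1)$ together with the fact that $\nabla^g$ is torsion-free and Hermitian, cf.~\cite[p.~118]{ES}), whereas you deduce it from minimality via the Euler--Lagrange equation; both arguments are standard and equally valid.
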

\begin{proof} The vanishing of the tension field $\tau(f)$ for $J$-holomorphic curves $f$ is well-known, cf.~an example on \cite[p.~118]{ES}, and can be derived directly from formula \eqref{eqn:tension field tau f} with respect to a local orthonormal frame $\{e_1,e_2=je_1\}$, using that $df(e_2)=Jdf(e_1)$ and that the connection $\nabla^M=\nabla^g$ is torsion-free and Hermitian. The second part is proved in \cite[Lemma 2.2.1]{McS2} (note that deformations of $f$ do not change the integral homology class $f_*[\Sigma]$).
\end{proof}
\begin{rem}
More generally, if the target manifold is only almost K\"ahler, \cite[Lemma 2.2.1]{McS2} shows that $J$-holomorphic maps from closed Riemann surfaces are still absolute minima of the Dirichlet energy functional, hence harmonic maps. However, if $\nabla^M$ has torsion, the equation $\tau(f)=0$ does not necessarily follow. Dirac-harmonic maps for connections $\nabla^M$ with torsion have been studied in \cite{Br}. 
\end{rem}
The following statement appears in the proof of \cite[Theorem 1.1]{Sun} (more details on the definition of the curvature term $\mathcal{R}(f,\psi)$ can be found in Appendix \ref{sect:appendix B}).
\begin{prop}\label{prop:curv term zero} 
Let $f\colon\Sigma\rightarrow M$ be smooth map. Then 
\begin{equation*}
\mathcal{R}(f,\psi)=0
\end{equation*}
for all twisted spinors $\psi$ which are sections of one of the following subbundles of $S^c\otimes f^*T^{\mathbb{C}}M$ (using the notation of Section \ref{sect:TQFT}):
\begin{equation}\label{eqn:eigenbundles dvolh I}
\begin{split}
&S^{c+}\otimes (T_f^{1,0}M\oplus T_f^{0,1}M)\\
&S^{c-}\otimes (T_f^{1,0}M\oplus T_f^{0,1}M)\\
&(S^{c+}\otimes T_f^{1,0}M)\oplus (S^{c-}\otimes T_f^{0,1}M)\\
&(S^{c+}\otimes T_f^{0,1}M)\oplus (S^{c-}\otimes T_f^{1,0}M).
\end{split}
\end{equation}
\end{prop}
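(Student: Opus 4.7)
The curvature term $\mathcal{R}(f,\psi)\in\Gamma(f^*TM)$ of Appendix \ref{sect:appendix B} is, in a local orthonormal frame $\{e_\alpha\}$ of $\Sigma$ and a local frame $\{\partial_i\}$ of $M$, schematically a sum of contributions of the form
\[
R^\ell_{kij}(f)\,\mathrm{Re}\langle \psi^i, e_\alpha\cdot\psi^j\rangle\, df^k(e_\alpha)\,\partial_\ell,
\]
so it vanishes once either the spinor bilinears $\langle \psi^i, e_\alpha\cdot\psi^j\rangle$ or the relevant curvature components do. I would attack the four subbundles in \eqref{eqn:eigenbundles dvolh I} in two groups: the first two need only the $\mathbb{Z}/2$-grading of $S^c$, while the last two additionally invoke the K\"ahler identities for $R$.

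For a section $\psi$ of $S^{c\pm}\otimes f^*T^{\mathbb{C}}M$, every coordinate component $\psi^i$ lies in the same chirality $S^{c\pm}$. Clifford multiplication by a real tangent vector exchanges $S^{c+}$ and $S^{c-}$, and these subbundles are orthogonal under the Hermitian inner product on $S^c$. Hence every pairing $\langle\psi^i,e_\alpha\cdot\psi^j\rangle$ vanishes pointwise, so $\mathcal{R}(f,\psi)=0$ on the first two subbundles without any K\"ahler hypothesis.

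For the last two subbundles, decompose $\psi=\psi'+\psi''$ along $T^{\mathbb{C}}M=T^{1,0}M\oplus T^{0,1}M$; in case 3 the two summands lie in $S^{c+}\otimes f^*T^{1,0}M$ and $S^{c-}\otimes f^*T^{0,1}M$, and in case 4 the complex types are interchanged. The diagonal contributions $\mathcal{R}(f,\psi',\psi')$ and $\mathcal{R}(f,\psi'',\psi'')$ vanish by the chirality argument just used. Only the cross contribution, whose two spinor slots are necessarily of opposite complex types, survives a priori. Here I would invoke two K\"ahler identities: $R(X,Y)\circ J=J\circ R(X,Y)$, and---more importantly---$R(X,Y)=0$ whenever $X,Y$ are both of type $(1,0)$ or both of type $(0,1)$ (equivalently, the Chern curvature is of type $(1,1)$). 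After splitting $df$ and the output direction $\partial_\ell$ into their $(1,0)$- and $(0,1)$-parts, only a few candidate terms survive both the chirality constraint and the K\"ahler type condition; these should cancel via the Riemann symmetries $R_{abcd}=R_{cdab}=-R_{bacd}$ together with the exchange $\psi'\leftrightarrow\psi''$.

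\textbf{Main obstacle.} The chirality reduction is routine; the work lies in the last paragraph. Explicit index bookkeeping is feasible but fragile. A cleaner coordinate-free route would be to view $\mathcal{R}(f,\cdot)$ as an endomorphism of $S^c\otimes f^*T^{\mathbb{C}}M$ that intertwines with the two commuting endomorphisms $\mathrm{dvol}_h\otimes\mathrm{id}$ and $\mathrm{id}\otimes J$, recognise the four subbundles of \eqref{eqn:eigenbundles dvolh I} as common eigenspaces of this pair, and then conclude directly from the K\"ahler relation $[R(X,Y),J]=0$ that $\mathcal{R}(f,\cdot)$ is forced to vanish on each of them.
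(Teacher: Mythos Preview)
Your argument for the first two subbundles (pure chirality $S^{c\pm}$) is correct and is exactly what the paper does.

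For the last two subbundles, your primary plan is more complicated than needed and you leave the crucial cancellation unverified. The paper's proof is precisely your ``cleaner coordinate-free route,'' and it requires only the K\"ahler identity $[R(X,Y),J]=0$; the stronger $(1,1)$-type condition $R^{2,0}=R^{0,2}=0$ never enters. The point is that the operator $\Rf(X,\cdot)$ on $S^c\otimes f^*T^{\mathbb{C}}M$ (not $\mathcal{R}(f,\cdot)$, which takes values in $f^*TM$) anti-commutes with $\mathrm{dvol}_h\otimes\mathrm{Id}$ (Clifford multiplication by $e_\alpha$ swaps $S^{c\pm}$) and commutes with $\mathrm{Id}\otimes J$ (since $R(X,df(e_\alpha))$ preserves $T^{1,0}M$ and $T^{0,1}M$ on a K\"ahler manifold), hence anti-commutes with $I=\mathrm{dvol}_h\otimes J$. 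The four bundles in \eqref{eqn:eigenbundles dvolh I} are exactly the eigenspaces of $\mathrm{dvol}_h$ and of $I$, so $\Rf(X,\psi)$ lands in the orthogonal complement of $\psi$ and $2g(\mathcal{R}(f,\psi),f^*X)=\langle\psi,\Rf(X,\psi)\rangle=0$ for every $X$. The paper also phrases this variationally: for any variation $f_t$ the operator $\Dirft$ swaps $S^{c\pm}$ and preserves the $(1,0)/(0,1)$ type (because $\nabla^g$ is Hermitian), so $\langle\psi_t,\Dirft\psi_t\rangle\equiv 0$ for all $t$, and its $t$-derivative forces $\mathcal{R}(f,\psi)=0$ via \eqref{eqn:var psi,Dpsi R(f,psi)}. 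Either way, no index bookkeeping or Riemann-symmetry cancellation is required.
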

\begin{proof}
This can be proved as in \cite{Sun} by considering the expression (using the notation from Appendix \ref{sect:appendix B})
\begin{equation*}
2g(\mathcal{R}(f,\psi),f^*X)=\langle\psi,\Rf(X,\psi)\rangle.
\end{equation*}
Alternatively, consider a smooth map $f\colon\Sigma\rightarrow M$ with variation $f_t$ given by a vector field $X\in\Gamma(f^*TM)$. Any spinor $\psi\in\Gamma(S^c\otimes f^*T^{\mathbb{C}}M$) defines a spinor $\psi_t=\sum_\mu \psi_\mu\otimes f_t^*\partial_\mu$ with time-independent components $\psi_\mu$ with respect to local coordinates on $M$. By equation \eqref{eqn:var psi,Dpsi R(f,psi)}
\begin{equation*}
\left.\frac{d}{dt}\right|_{t=0}\int_\Sigma \langle \psi_t, \Dirft\psi_t\rangle\,\mathrm{dvol}_h=2\int_\Sigma g(\mathcal{R}(f,\psi),f^*X)\,\mathrm{dvol}_h.
\end{equation*}
For any variation $f_t$ the Dirac operator $\Dirft$ maps positive (negative) to negative (positive) Weyl spinors and preserves the $(1,0)$- and $(0,1)$-type of twisted spinors. Furthermore, the bundles $S^{c+}$ and $S^{c-}$ as well as $T^{1,0}M$ and $T^{0,1}M$ are orthogonal with respect to the Hermitian bundle metric. 

This implies for every section $\psi$ of the bundles in \eqref{eqn:eigenbundles dvolh I} that the corresponding spinor $\psi_t$ satisfies
\begin{equation*}
\langle \psi_t, \Dirft\psi_t\rangle = 0\quad\forall t.
\end{equation*}
\end{proof}
\begin{rem}
The first two bundles in \eqref{eqn:eigenbundles dvolh I} can be described as the $(\mp i)$-eigenspaces of the bundle automorphism $\mathrm{dvol}_h=\mathrm{dvol}_h\otimes \mathrm{Id}$ on $S^c\otimes f^*T^{\mathbb{C}}M$ with $\mathrm{dvol}_h^2=-\mathrm{Id}$ (cf.~equation \eqref{eqn:Clifford volume}). The other two bundles are the $(\pm 1)$-eigenspaces of the bundle automorphism $I=\mathrm{dvol}_h\otimes J$ on $S^c\otimes f^*T^{\mathbb{C}}M$ with $I^2=\mathrm{Id}$. 
\end{rem}
\begin{proof}[Proof of Theorem \ref{thm:J-hol curve Dirac-harmonic}] The first claim is a direct consequence of the Euler--Lagrange equations \eqref{eqn:Dirac-harmonic map intro} and Propositions \ref{prop:J-hol tau(f) 0}, \ref{prop:curv term zero} and \ref{prop:twisted Dirac Df formula}. The second claim follows because if all of the vector spaces are zero, then
\begin{equation*}
\mathrm{ind}_{\mathbb{C}}\Dirfp=\mathrm{ind}_{\mathbb{C}}\Dirfpp=0.
\end{equation*}
\end{proof}
\begin{rem}
A Dirac-harmonic map $(f,\psi)$ as in Theorem \ref{thm:J-hol curve Dirac-harmonic}, whose underlying map $f$ is harmonic, is called {\em uncoupled} in \cite{AG1}. The Dirac-harmonic maps $(f,\psi)$ in Theorem \ref{thm:J-hol curve Dirac-harmonic} have minimal bosonic action $L[f]$ in their homology class $A$.
\end{rem}
\begin{ex}\label{ex:sphere in CY}
Suppose that $(M,J,g,\omega)$ is a Calabi--Yau manifold of complex dimension $n$, hence $c_1(TM)=0$, and $f\colon \mathbb{C}\mathrm{P}^1\rightarrow M$ is a $J$-holomorphic sphere. If $(f,J)$ is regular, then the vector space $\mathrm{ker}\,\Dolfp$ has complex dimension $n$ and is the tangent space $\mathrm{Def}_J(f)$ in $f$ of the local moduli space of $J$-holomorphic spheres (compare with \cite[Remark 2.4]{G3}). For every $\psi\in\mathrm{ker}\,\Dolfp$, the pair $(f,\psi)$ is Dirac-harmonic.   
\end{ex}
\begin{defn}
Let $(\Sigma,j)$ be a fixed Riemann surface. For a class $A\in H_2(M;\mathbb{Z})$ we denote by $\mathcal{M}(A,J)$ the space of all $J$-holomorphic curves $f\colon \Sigma\rightarrow M$ with $f_*[\Sigma]=A$.
\end{defn}
\begin{proof}[Proof of Corollary \ref{cor:moduli M(A,J) Dirac-harmonic}] This follows, because under the assumptions $T_f\mathcal{M}(A,J)=\mathrm{ker}\,\Dolfp$ for all $f\in\mathcal{M}(A,J)$ (cf.~Remark \ref{rem:discussion J integrable regular}).
\end{proof}
\begin{ex}\label{ex:sphere in Kaehler surface} Suppose that $(M,J,g,\omega)$ is a K\"ahler surface and $f\colon \mathbb{C}\mathrm{P}^1\rightarrow M$ an embedded $J$-holomorphic sphere representing a class $A$ of self-intersection $A^2=A\cdot A\geq -1$. Then every $f'\in \mathcal{M}(A,J)$ is an embedding and $(f',J)$ is regular (see \cite[Corollary 3.5.4]{McS1}). By the adjunction formula
\begin{equation*}
-2=A^2-c_1(A),
\end{equation*}
hence $\mathcal{M}(A,J)$ is a smooth manifold of real dimension $8+2A^2\geq 6$. The tangent bundle $T\mathcal{M}(A,J)$ is a complex vector bundle and consists of Dirac-harmonic maps.
\end{ex}

\section{Generalization to twisted $\mathrm{Spin}^c$-structures on $\Sigma$}
We consider the following generalization for the same setup as in Section \ref{sect:curvature term Rfpsi}: Let $L\rightarrow \Sigma$ be a holomorphic Hermitian line bundle with Chern connection $\nabla$ and Dolbeault operator
\begin{equation*}
\bar{\partial}=\bar{\partial}_{\nabla}\colon \Gamma(L)\longrightarrow \Omega^{0,1}(L).
\end{equation*}
Then $\mathfrak{s}^c\otimes L$ is a $\mathrm{Spin}^c$-structure with holomorphic spinor bundles
\begin{align*}
S^{c+}\otimes L&=L\\
S^{c-}\otimes L&=K^{-1}\otimes L
\end{align*}
and Dolbeault--Dirac operator
\begin{equation*}
D=\sqrt{2}(\bar{\partial}+\bar{\partial}^*)\colon \Gamma(S^{c\pm}\otimes L)\longrightarrow\Gamma(S^{c\mp}\otimes L).
\end{equation*} 
\begin{lem}
Let $f\colon\Sigma\rightarrow M$ be a smooth map. The twisted Dirac operator
\begin{equation*}
\Dirf\colon \Gamma(S^{c\pm}\otimes L\otimes f^*T^{\mathbb{C}}M)\longrightarrow \Gamma(S^{c\mp}\otimes L\otimes f^*T^{\mathbb{C}}M)
\end{equation*}
decomposes into the sum $\Dirf=\Dirfp+\Dirfpp$ of two twisted Dolbeault--Dirac operators
\begin{align*}
\Dirfp&=\sqrt{2}(\Dolfp+\Dolfps)\\
\Dirfpp&=\sqrt{2}(\Dolfpp+\Dolfpps).
\end{align*}
\end{lem}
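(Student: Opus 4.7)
The plan is to mimic the proof of Proposition \ref{prop:twisted Dirac Df formula} essentially verbatim, since the holomorphic Hermitian line bundle $L$ is just ``carried along'' by all the relevant structures. First I would split $S^c \otimes L \otimes f^*T^{\mathbb{C}}M$ according to the decomposition $f^*T^{\mathbb{C}}M = f^*T^{1,0}M \oplus f^*T^{0,1}M$ into
\begin{equation*}
(S^c \otimes L \otimes f^*T^{1,0}M) \oplus (S^c \otimes L \otimes f^*T^{0,1}M).
\end{equation*}
Since $(M,J,g,\omega)$ is K\"ahler, the Levi--Civita connection $\nabla^M=\nabla^g$ preserves $T^{1,0}M$ and $T^{0,1}M$ separately, and the Chern connection $\nabla$ on $L$ preserves $L$ as a complex line bundle; together with $\nabla^h$ these give the twisted connection $\nabla^{f \otimes \nabla}$, which preserves each of the two direct summands. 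Clifford multiplication acts only on the $S^c$-factor, hence it also preserves this splitting. Consequently $\Dirf$ decomposes as $\Dirfp \oplus \Dirfpp$ acting on the two summands, just as in Proposition \ref{prop:twisted Dirac Df formula}.

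Next I would repeat the local computation from the proof of Proposition \ref{prop:twisted Dirac Df formula}. For a positive spinor $\psi \in \Gamma(S^{c+} \otimes L \otimes f^*T^{1,0}M) = \Gamma(L \otimes f^*T^{1,0}M)$, working in a local $h$-orthonormal frame $\{e_1, e_2 = je_1\}$ and using the formulas $e_1 \cdot \phi = \phi \bar{\kappa}$, $e_2 \cdot \phi = i\phi \bar{\kappa}$ for Clifford multiplication on $S^{c+}$, I compute
\begin{equation*}
\Dirfp\psi = \bar{\kappa} \otimes \bigl( \nabla^{f\otimes\nabla}_{e_1} \psi + i\, \nabla^{f\otimes\nabla}_{e_2} \psi \bigr) = \bar{\kappa} \otimes \bigl( \nabla^{f\otimes\nabla}_{e_1} \psi + J\, \nabla^{f\otimes\nabla}_{je_1} \psi \bigr),
\end{equation*}
where in the last step I use that multiplication by $i$ on $f^*T^{1,0}M$ agrees with $J$ and that $\nabla^{f\otimes\nabla}$ is complex linear. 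Invoking the expansion \eqref{eqn:beta 0,1 expansion} this equals $\sqrt{2}\, \Dolfp \psi$, where $\Dolfp$ is now the Dolbeault operator for the holomorphic vector bundle $L \otimes f^*T^{1,0}M$ built from the tensor product of the Chern connection on $L$ and the induced Hermitian connection on $f^*T^{1,0}M$ via $\Dolfp\psi = \tfrac{1}{2}(\nabla^{f\otimes\nabla}\psi + J \circ \nabla^{f\otimes\nabla}\psi \circ j)$. Since $\Dirf$ is formally self-adjoint (being built from metric connections), the behavior on negative spinors is forced to be $\sqrt{2}\, \Dolfps$, yielding $\Dirfp = \sqrt{2}(\Dolfp + \Dolfps)$.

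Running the same argument on the $(0,1)$-summand, with $J$ replaced by $-J$ in the eigenspace formulas (that is, using $\nabla^{f\otimes\nabla}_{e_1}\psi - J\nabla^{f\otimes\nabla}_{je_1}\psi$), gives $\Dirfpp = \sqrt{2}(\Dolfpp + \Dolfpps)$. No serious obstacle is expected: the only bookkeeping point to watch is that $L$ is inert under Clifford multiplication and that the ``twisted'' Dolbeault operators $\Dolfp$, $\Dolfpp$ in the statement are indeed understood with respect to the combined connection $\nabla^{f\otimes\nabla}$ on $L \otimes f^*T^{1,0}M$ and $L \otimes f^*T^{0,1}M$, respectively; once this is fixed, the proof reduces to the computation above.
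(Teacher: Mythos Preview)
Your proposal is correct and is exactly the approach the paper intends: the Lemma is stated without proof precisely because it is the verbatim repetition of the argument for Proposition~\ref{prop:twisted Dirac Df formula}, with the extra tensor factor $L$ carried along passively by Clifford multiplication and by the combined connection. The only minor remark is that the preservation of $T^{1,0}M$ and $T^{0,1}M$ by $\nabla^M$ uses only that $\nabla^M$ is Hermitian (as in Section~4), not the K\"ahler assumption; but since Section~7 inherits the K\"ahler setup from Section~\ref{sect:curvature term Rfpsi}, this makes no difference.
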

In this situation we can define Dirac-harmonic maps $(f,\psi)$ as solutions of the analogue of the system of equations \eqref{eqn:Dirac-harmonic map intro}. 
\begin{cor}\label{cor:Dirac-harmonic twisted hol L}
Let $f\colon\Sigma\rightarrow M$ be a $J$-holomorphic curve with $A=f_*[\Sigma]$. If $\psi\in \Gamma(S^c\otimes L \otimes f^*T^{\mathbb{C}}M)$ is an element of one of the following vector spaces, then $(f,\psi)$ is Dirac-harmonic:
\begin{equation}\label{eqn:J-hol curve Dirac-harmonic vector spaces II}
\begin{array}{ll} \mathrm{ker}\,\Dolfp\oplus \mathrm{ker}\,\Dolfpp, & \mathrm{ker}\,\Dolfps\oplus \mathrm{ker}\,\Dolfpps \\ \mathrm{ker}\,\Dolfp\oplus \mathrm{ker}\,\Dolfpps, & \mathrm{ker}\,\Dolfpp\oplus \mathrm{ker}\,\Dolfps.  \end{array}
\end{equation}
By the Hirzebruch--Riemann--Roch Theorem
\begin{align*}
\mathrm{ind}_{\mathbb{C}}\Dirfp&=n(1-g_\Sigma+c_1(L))+c_1(A)\\
\mathrm{ind}_{\mathbb{C}}\Dirfpp&=n(1-g_\Sigma+c_1(L))-c_1(A),
\end{align*}
where we write $c_1(L)$ for $\langle c_1(L),[\Sigma]\rangle$.
\end{cor}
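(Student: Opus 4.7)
The plan is to mimic the proof of Theorem \ref{thm:J-hol curve Dirac-harmonic}, with the extra tensor factor $L$ included throughout. Since $f$ is $J$-holomorphic, Proposition \ref{prop:J-hol tau(f) 0} still gives $\tau(f)=0$, so the Dirac-harmonic system \eqref{eqn:Dirac-harmonic map intro} reduces to verifying the two decoupled conditions $\mathcal{R}(f,\psi)=0$ and $\Dirf\psi=0$.

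For the Dirac equation, the preceding lemma gives $\Dirf=\Dirfp+\Dirfpp$ with $\Dirfp=\sqrt{2}(\Dolfp+\Dolfps)$ and $\Dirfpp=\sqrt{2}(\Dolfpp+\Dolfpps)$. Each Dolbeault operator exchanges the two Weyl components of $S^c\otimes L\otimes f^*T^{1,0}M$ or of $S^c\otimes L\otimes f^*T^{0,1}M$, so exactly as in Corollary \ref{cor:J integrable ker Df} one has
\begin{equation*}
\mathrm{ker}\,\Dirf \;=\; \mathrm{ker}\,\Dolfp \,\oplus\, \mathrm{ker}\,\Dolfps \,\oplus\, \mathrm{ker}\,\Dolfpp \,\oplus\, \mathrm{ker}\,\Dolfpps,
\end{equation*}
and each of the four vector spaces in \eqref{eqn:J-hol curve Dirac-harmonic vector spaces II} is contained in this kernel.

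For the vanishing of $\mathcal{R}(f,\psi)$ I would extend Proposition \ref{prop:curv term zero} to the twisted setting. When tensored with $L$, the four subbundles in \eqref{eqn:eigenbundles dvolh I} remain the $\mp i$-eigenbundles of $\mathrm{dvol}_h\otimes\mathrm{Id}$ and the $\pm 1$-eigenbundles of $\mathrm{dvol}_h\otimes J$ on $S^c\otimes L\otimes f^*T^{\mathbb{C}}M$. The variational argument of Proposition \ref{prop:curv term zero} then carries over verbatim: for a variation $f_t$ of $f$ and $\psi_t=\sum_\mu\psi_\mu\otimes f_t^*\partial_\mu$ with time-independent components, the twisted Dirac operator $\Dirft$ still exchanges positive and negative Weyl spinors and preserves the $(1,0)$/$(0,1)$-splitting on $f^*T^{\mathbb{C}}M$, since tensoring with the fixed bundle $L$ neither depends on $f$ nor interacts with these two splittings. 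Each of the kernels in \eqref{eqn:J-hol curve Dirac-harmonic vector spaces II} lies inside one of the four twisted eigenbundles, so $\langle\psi_t,\Dirft\psi_t\rangle\equiv 0$ in $t$, and the vanishing follows from \eqref{eqn:var psi,Dpsi R(f,psi)}.

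Finally, the index formula follows from Hirzebruch--Riemann--Roch applied to the complex vector bundles $L\otimes f^*T^{1,0}M$ and $L\otimes f^*T^{0,1}M$ of complex rank $n$ over $\Sigma$. Using $c_1(f^*T^{1,0}M)=c_1(A)$ and $c_1(f^*T^{0,1}M)=-c_1(A)$, the degrees come out to $nc_1(L)\pm c_1(A)$, and the index of the corresponding Dolbeault--Dirac operator is $\deg+n(1-g_\Sigma)$, which yields the stated formulas. I do not foresee a serious obstacle here; the only delicate step is the bookkeeping needed to confirm that tensoring by $L$ preserves the type decompositions that drive the proof of Proposition \ref{prop:curv term zero}.
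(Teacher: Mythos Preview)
Your proposal is correct and follows exactly the approach the paper intends: the paper does not give an explicit proof of this corollary, treating it as an immediate generalization of Theorem \ref{thm:J-hol curve Dirac-harmonic} via the preceding lemma together with Propositions \ref{prop:J-hol tau(f) 0} and \ref{prop:curv term zero}. Your more detailed verification that tensoring by $L$ preserves the Weyl and $(1,0)/(0,1)$ type decompositions used in Proposition \ref{prop:curv term zero}, and your Hirzebruch--Riemann--Roch computation for $L\otimes f^*T^{1,0}M$, are both sound.
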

\begin{rem}\label{rem:hol supercurve}
A Dirac-harmonic map $(f,\psi)$, where $f$ is a $J$-holomorphic curve and $\psi\in\mathrm{ker}\,\Dolfp$, is a $(\nabla^g,J)$-holomorphic supercurve as studied in \cite{G3}, cf.~also \cite{G1}.
\end{rem}
\begin{ex} Consider again the situation in Example \ref{ex:sphere in Kaehler surface} of a K\"ahler surface $(M,J,g,\omega)$ with an embedded $J$-holomorphic sphere $f\colon \mathbb{C}\mathrm{P}^1\rightarrow M$ of self-intersection $A^2=A\cdot A\geq -1$ and smooth moduli space $\mathcal{M}(A,J)$. Let $L\rightarrow\Sigma$ be a holomorphic line bundle with $c_1(L)>0$. Then
\begin{equation*}
c_1(L\otimes f^*T^{1,0}M)=2c_1(L)+c_1(A)\geq 3
\end{equation*}
and the arguments in \cite[Section 3.5]{McS1} using the Kodaira vanishing theorem show that $\mathrm{coker}\,\Dolfp=0$. Hence the complex vector space $\mathrm{ker}\,\Dolfp$ has constant dimension
\begin{equation*}
\dim_{\mathbb{C}}\mathrm{ker}\,\Dolfp=4+A^2+2c_1(L)
\end{equation*}
for all $f\in\mathcal{M}(A,J)$. There is a complex vector bundle over the infinite-dimensional manifold $\mathcal{X}_A$ from Definition \ref{defn:XA} with fibre $\Gamma(L\otimes f^*T^{1,0}M)$ over $f\in \mathcal{X}_A$. Since $\mathcal{M}(A,J)$ is a submanifold of $\mathcal{X}_A$, it follows that the subset of Dirac-harmonic maps $(f,\psi)$ with
\begin{equation*}
f\in \mathcal{M}(A,J),\quad \psi\in\mathrm{ker}\,\Dolfp\subset\Gamma(L\otimes f^*T^{1,0}M) 
\end{equation*}
is a smooth complex vector bundle $E$ over $\mathcal{M}(A,J)$ of rank
\begin{equation*}
\mathrm{rk}_{\mathbb{C}}E=4+A^2+2c_1(L).
\end{equation*}
In particular, for $L=K^{\otimes (-q)}$ with integers $q\geq 1$, we have $c_1(L)=2q$ and the complex vector bundle $E$ over $\mathcal{M}(A,J)$ of Dirac-harmonic maps has rank 
\begin{equation*}
\mathrm{rk}_{\mathbb{C}}E=4+A^2+4q,
\end{equation*}
which becomes arbitrarily large for $q\gg 1$.
\end{ex}
\begin{ex}\label{ex:Dirac harmonic spin structure}
Let $\mathfrak{s}$ be a spin structure on $\Sigma$ and $L=K^{\scriptscriptstyle\frac{1}{2}}$ the associated holomorphic square root of the canonical bundle $K$. Then $S\cong S^c\otimes K^{\scriptscriptstyle\frac{1}{2}}$ is the spinor bundle of $\mathfrak{s}$ with spin Dirac operator $D=\sqrt{2}(\bar{\partial}+\bar{\partial}^*)$ (cf.~\cite{H}) and
\begin{align*}
\mathrm{ind}_{\mathbb{C}}\Dirfp&=c_1(A)\\
\mathrm{ind}_{\mathbb{C}}\Dirfpp&=-c_1(A).
\end{align*}
The vector spaces in \eqref{eqn:J-hol curve Dirac-harmonic vector spaces II} are called $V^\pm_{even}$ and $V^\pm_{odd}$ in the proof of \cite[Theorem 1.1]{Sun}. 
\end{ex}

\subsection*{Acknowledgements} I would like to thank Uwe Semmelmann for discussions on a previous version of this paper and the referee for helpful remarks.

\appendix
\renewcommand{\thesection}{\Alph{section}}

\section{}\label{sect:appendix A}
Let $(\Sigma,j,h)$ be a closed Riemann surface with complex structure $j$ and compatible Riemannian metric $h$. We fix some notation for the decomposition of tangent vectors and $1$-forms into those of type $(1,0)$ and $(0,1)$.

The almost complex structure $j$ on $T\Sigma$ extends canonically to a complex linear isomorphism on $T^{\mathbb{C}}\Sigma=T\Sigma\otimes_{\mathbb{R}} \mathbb{C}$ and we decompose
\begin{equation}\label{eqn:TSigma 10 01}
T^{\mathbb{C}}\Sigma=T^{1,0}\oplus T^{0,1}
\end{equation}
into the complex $(+i)$- and $(-i)$-eigenspaces of $j$. The Riemannian metric $h$ extends to a Hermitian bundle metric on $T^{\mathbb{C}}\Sigma$ and the decomposition in \eqref{eqn:TSigma 10 01} is orthogonal.

The dual space $(T^{\mathbb{C}}\Sigma)^*$ of complex linear $1$-forms on $T^{\mathbb{C}}\Sigma$ decomposes into
\begin{equation*}
(T^{\mathbb{C}}\Sigma)^*=\Lambda^{1,0}\oplus \Lambda^{0,1},
\end{equation*}
where $\Lambda^{1,0}=K$ and $\Lambda^{0,1}=K^{-1}$ are the bundles of complex linear $1$-forms on $T^{1,0}$ and $T^{0,1}$. We have
\begin{align*}
\alpha\circ j&=i\alpha\quad\forall \alpha\in\Lambda^{1,0}\\
\beta\circ j&=-i\beta\quad\forall \beta\in\Lambda^{0,1}.
\end{align*}
If $\tau\in (T^{\mathbb{C}}\Sigma)^*$ is a $1$-form, then its decomposition into $(1,0)$- and $(0,1)$-components is given by
\begin{equation*}
\tau=\tau^{1,0}+\tau^{0,1}
\end{equation*}
with
\begin{equation*}
\tau^{1,0}=\tfrac{1}{2}(\tau - i\tau\circ j),\quad \tau^{0,1}=\tfrac{1}{2}(\tau + i\tau\circ j).
\end{equation*}
Let $(e_1,e_2)$ with $e_2=je_1$ be a local $h$-orthonormal basis of $T\Sigma$. Then
\begin{equation*}
\epsilon= \tfrac{1}{\sqrt{2}}(e_1-ie_2),\quad \bar{\epsilon}=\tfrac{1}{\sqrt{2}}(e_1+ie_2)
\end{equation*}
are local unit basis vectors of $T^{1,0}$ and $T^{0,1}$. We extend the dual real basis $(e_1^*,e_2^*)$ of $T^*\Sigma$ to a basis of complex linear $1$-forms of $(T^{\mathbb{C}}\Sigma)^*$. Then 
\begin{equation*}
\kappa = \tfrac{1}{\sqrt{2}}(e_1^* + ie_2^*),\quad \bar{\kappa}= \tfrac{1}{\sqrt{2}}(e_1^* - ie_2^*)
\end{equation*}
are the dual local unit basis vectors of $K$ and $K^{-1}$.

\section{}\label{sect:appendix B}
We summarize the definition of the curvature term $\mathcal{R}(f,\psi)$ that appears in the Euler--Lagrange equations \eqref{eqn:Dirac-harmonic map intro} for Dirac-harmonic maps. Let $(\Sigma,j,h)$ be a Riemann surface, $(M^n,g)$ a Riemannian manifold and $f\colon \Sigma\rightarrow M$ a smooth map. We denote by
\begin{equation*}
R\colon TM\times TM\times TM\longrightarrow TM
\end{equation*}
the curvature tensor, where we use the sign convention
\begin{equation*}
R(X,Y)Z=[\nabla^g_X,\nabla^g_Y]Z-\nabla^g_{[X,Y]}Z.
\end{equation*}
There is an induced map
\begin{align*}
TM\times (S^c\otimes f^*T^{\mathbb{C}}M)&\longrightarrow T^*\Sigma \times (S^c\otimes f^*T^{\mathbb{C}}M)\\
(X,\phi\otimes f^*Z)&\longmapsto \phi\otimes f^*(R(X,df(\cdot))Z).
\end{align*}
Composing with Clifford multiplication
\begin{equation*}
\gamma\colon T^*\Sigma\times S^c\longrightarrow S^c
\end{equation*}
we get the map
\begin{align*}
\Rf\colon TM\times (S^c\otimes f^*T^{\mathbb{C}}M)&\longrightarrow S^c\otimes f^*T^{\mathbb{C}}M\\
(X,\psi)&\longmapsto \Rf(X,\psi).
\end{align*}
\begin{defn}
We define
\begin{equation*}
\mathcal{R}(f,\cdot)\colon S^c\otimes f^*T^{\mathbb{C}}M\longrightarrow f^*TM,\quad \psi\longmapsto \mathcal{R}(f,\psi)
\end{equation*}
by
\begin{equation*}
g(\mathcal{R}(f,\psi),f^*X)=\frac{1}{2}\langle\psi,\Rf(X,\psi)\rangle\quad\forall f^*X\in f^*TM.
\end{equation*}
\end{defn}
With respect to a local orthonormal frame $e_1,e_2$ for $T\Sigma$ we can write
\begin{equation*}
\Rf(X,\phi\otimes f^*Z)=\sum_{\alpha=1}^2 e_\alpha\cdot\phi\otimes f^*(R(X,df(e_\alpha))Z).
\end{equation*}
With the components of the curvature tensor $R$ with respect to a local frame $\{y_k\}_{k=1}^n$
\begin{equation*}
\sum_{i=1}^{n}R_{ijml}y_i=R(y_m,y_l)y_j
\end{equation*}
we obtain the original formula for the definition of the curvature term $\mathcal{R}$ in \cite{CJLW2}:
\begin{equation*}
\mathcal{R}(f,\psi)=\frac{1}{2}\sum_{i,j,m,l,\alpha}R_{ijml}df(e_\alpha)_l\langle \psi_i,e_\alpha\cdot \psi_j\rangle f^*y_m.
\end{equation*}
The symmetries
\begin{equation*}
R_{ijml}=-R_{jiml},\quad \overline{\langle \psi_i,e_\alpha\cdot \psi_j\rangle}=-\langle \psi_j,e_\alpha\cdot \psi_i\rangle
\end{equation*}
imply that $\mathcal{R}(f,\psi)$ is indeed a real vector in $f^*TM$. 

Suppose that $f_t$ a variation of the smooth map $f\colon\Sigma\rightarrow M$ with
\begin{equation*}
f_0=f,\quad \left.\frac{df_t}{dt}\right|_{t=0}=f^*X\in \Gamma(f^*TM).
\end{equation*}
Let $\phi,\phi'\in \Gamma(S^c)$ be time-independent spinors on $\Sigma$, $Z,Z'$ time-independent vector fields on $M$ and define spinors
\begin{equation*}
\psi_t=\phi\otimes f_t^*Z,\quad \psi'_t=\phi'\otimes f_t^*Z'\in\Gamma(S^c\otimes_{\mathbb{R}}f^*TM). 
\end{equation*}
\begin{defn}\label{defn:nablaXg psi}
We set $df_-(e_\alpha)$ for the vector field $df_t(e_\alpha)$ along $f_t$ and
\begin{align*}
\nabla^g_X\psi&=\phi\otimes f^*\nabla^g_XZ\\
\nabla^g_X\nabla^f_{e_\alpha}\psi&=\nabla^h_{e_\alpha}\phi\otimes f^*\nabla^g_XZ+\phi\otimes f^*\nabla^g_X\nabla^g_{df_-(e_\alpha)}Z\\
&=\nabla^h_{e_\alpha}\phi\otimes f^*\nabla^g_XZ+\phi\otimes f^*(\nabla^g_{df(e_\alpha)}\nabla^g_XZ+R(X,df(e_\alpha))Z).
\end{align*}
In the last line we used that $[X,df_-(e_\alpha)]=0$, since $f_t$ is generated (to first order) by the flow of $X$.
\end{defn}
We calculate (cf.~the proof of \cite[Proposition 2.1]{CJLW2})
\begin{align*}
&\left.\frac{d}{dt}\right|_{t=0}\langle \psi'_t, \Dirft\psi_t\rangle\\
&=\left.\frac{d}{dt}\right|_{t=0}\sum_{\alpha=1}^2\langle \phi'\otimes f_t^*Z', e_\alpha\cdot ((\nabla^h_{e_\alpha}\phi)\otimes f_t^*Z+\phi\otimes f_t^*\nabla^g_{df_t(e_\alpha)}Z)\rangle\\ 
&=\sum_{\alpha=1}^2\left(\langle \phi',e_\alpha\cdot(\nabla^h_{e_\alpha}\phi)\rangle L_Xg(Z',Z)+ \langle\phi',e_\alpha\cdot\phi\rangle L_Xg(Z',\nabla^g_{df_-(e_\alpha)}Z)\right)\\
&=\langle \nabla^g_X\psi',\Dirf\psi\rangle+\langle \psi',\Dirf\nabla^g_X\psi\rangle+\langle \psi',\Rf(X,\psi)\rangle.
\end{align*}
In particular, for $\psi'=\psi$ and $\Dirf\psi=0$ we get formula \eqref{eqn:var psi,Dpsi R(f,psi)}, using that $\Dirf$ is formally self-adjoint.

\bibliographystyle{amsplain}

\begin{thebibliography}{999}

\bibitem{AF} L.~Alvarez-Gaum\'e, D.~Z.~Freedman, {\em The background field method and the ultraviolet structure of the supersymmetric nonlinear $\sigma$-model}, Annals Phys.~{\bf 134} (1981) 85--109.

\bibitem{AG1} B.~Ammann, N.~Ginoux, {\em Dirac-harmonic maps from index theory}, Calc.~Var.~Partial Differential Equations {\bf 47} (2013), no.~3-4, 739--762.

\bibitem{AG2} B.~Ammann, N.~Ginoux, {\em Some examples of Dirac-harmonic maps}, Lett.~Math.~Phys.~{\bf 109} (2019), no.~5, 1205--1218.

\bibitem{At} M.~F.~Atiyah, {\em Riemann surfaces and spin structures}, Ann.~Sci.~\'Ecole Norm.~Sup.~(4) {\bf 4} (1971) 47--62.

\bibitem{BGV} N.~Berline, E.~Getzler, M.~Vergne, {\sl Heat kernels and Dirac operators}. Corrected reprint of the 1992 original. Grundlehren Text Editions. Springer-Verlag, Berlin, 2004. x+363 pp.

\bibitem{Br} V.~Branding, {\em Dirac-harmonic maps with torsion}, Commun.~Contemp.~Math.~{\bf 18} (2016), no.~4, 1550064, 19 pp.

\bibitem{CMPF} C.~G.~Callan, Jr., D.~Friedan, E.~J.~Martinec, M.~J.~Perry, {\em Strings in background fields}, Nucl.~Phys.~{\bf B262} (1985) 593--609.

\bibitem{CJLW1} Q.~Chen, J.~Jost, J.~Li, G.~Wang, {\em Regularity theorems and energy identities for Dirac-harmonic maps}, Math.~Z.~{\bf 251} (2005), no.~1, 61--84.

\bibitem{CJLW2} Q.~Chen, J.~Jost, J.~Li, G.~Wang, {\em Dirac-harmonic maps}, Math.~Z.~{\bf 254} (2006), no.~2, 409--432.

\bibitem{CJSZ} Q.~Chen, J.~Jost, L.~Sun, M.~Zhu, {\em Dirac-harmonic maps between Riemann surfaces}, Asian J.~Math.~{\bf 23} (2019), no.~1, 107--125.

\bibitem{CJW} Q.~Chen, J.~Jost, G.~Wang, {\em Liouville theorems for Dirac-harmonic maps}, J.~Math.~Phys.~{\bf 48} (2007), no.~11, 113517, 13 pp.

\bibitem{ES} J.~Eells, Jr., J.~H.~Sampson, {\em Harmonic mappings of Riemannian manifolds}, Amer.~J.~Math.~{\bf 86} (1964) 109--160.

\bibitem{Fr} T.~Friedrich, {\sl Dirac operators in Riemannian geometry}. Graduate Studies in Mathematics, 25. American Mathematical Society, Providence, RI, 2000. xvi+195

\bibitem{Gin} N.~Ginoux, {\sl The Dirac spectrum}. Lecture Notes in Mathematics, 1976. Springer-Verlag, Berlin, 2009. xvi+156 pp.

\bibitem{G1} J.~Groeger, {\em Holomorphic supercurves and supersymmetric sigma models}, J.~Math.~Phys.~{\bf 52} (2011), no.~12, 123505, 21 pp.

\bibitem{G2} J.~Groeger, {\em Compactness for holomorphic supercurves}, Geom.~Dedicata {\bf 170} (2014), 347--384.

\bibitem{G3} J.~Groeger, {\em Transversality for holomorphic supercurves}, Math.~ Phys.~Anal.~Geom.~{\bf 17} (2014), no.~3-4, 341--367. 

\bibitem{H} N.~Hitchin, {\em Harmonic spinors}, Advances in Math.~{\bf 14} (1974), 1--55.

\bibitem{JKTWZ} J.~Jost, E.~Ke\ss ler, J.~Tolksdorf, R.~Wu, M.~Zhu, {\em Regularity of solutions of the nonlinear sigma model with gravitino}, Comm.~Math.~Phys.~{\bf 358} (2018), no.~1, 171--197.

\bibitem{JMZ} J.~Jost, X.~Mo, M.~Zhu, {\em Some explicit constructions of Dirac-harmonic maps}, J.~Geom.~Phys.~{\bf 59} (2009), no.~11, 1512--1527.

\bibitem{LM} H.~B.~Lawson, Jr., M.-L.~Michelsohn, {\em Spin geometry}, Princeton Mathematical Series, {\bf 38}. Princeton University Press, Princeton, NJ, 1989. xii+427 pp. ISBN: 0-691-08542-0

\bibitem{McS1} D.~McDuff, D.~Salamon, {\sl J-holomorphic curves and quantum cohomology}. University Lecture Series {\bf 6}. American Mathematical Society, Providence, RI, 1994. viii+207 pp.

\bibitem{McS2} D.~McDuff, D.~Salamon, {\sl J-holomorphic curves and symplectic topology}. Second edition. American Mathematical Society Colloquium Publications, {\bf 52}. American Mathematical Society, Providence, RI, 2012. xiv+726 pp.

\bibitem{Mo} X.~Mo, {\em Some rigidity results for Dirac-harmonic maps}, Publ.~Math. ~Debrecen {\bf 77} (2010), no.~3-4, 427--442.

\bibitem{Sun} L.~Sun, {\em A note on the uncoupled Dirac-harmonic maps from K\"ahler spin manifolds to K\"ahler manifolds}, Manuscripta Math.~{\bf 155} (2018), no.~1-2, 197--208.

\bibitem{TF} M.~F.~Tehrani, K.~Fukaya, {\em Gromov-Witten theory via Kuranishi structures}. in: Virtual fundamental cycles in symplectic topology, 111--252, Math.~ Surveys Monogr.~{\bf 237}, Amer.~Math.~Soc., Providence, RI, 2019.

\bibitem{W1} E.~Witten, {\em Topological sigma models}, Comm.~Math.~Phys.~{\bf 118} (1988), no.~3, 411--449.

\bibitem{W2} E.~Witten, {\em Mirror manifolds and topological field theory}. Essays on mirror manifolds, 120--158, Int.~Press, Hong Kong, 1992.

\bibitem{Yang} L.~Yang, {\em A structure theorem of Dirac-harmonic maps between spheres}, Calc.~Var.~Partial Differential Equations {\bf 35} (2009), no. 4, 409--420.


\end{thebibliography}

\bigskip
\bigskip

\end{document}